\documentclass[11pt,bezier]{article}
\usepackage{amsmath,amssymb,amsfonts,euscript,graphicx}

\newtheorem{preproof}{{\bf \indent Proof.}}

\newenvironment{proof}[1]{\begin{preproof}{\rm
               #1}\hfill{$\Box$}}{\end{preproof}}

\newtheorem{cor}{\bf\indent Corollary}[section]
\newtheorem{example}{\bf\indent Example}[section]
\newtheorem{thm}{{\bf\indent Theorem}}[section]

\newtheorem{lem}{\bf\indent Lemma}[section]

\title{\bf \large  Inclusion graph of annihilators in a commutative ring\thanks
{{\it Key Words}:   Inclusion graph; Annihilator; Annihilator graph; Diameter; Girth\newline
{\indent{~~2010 {\it Mathematics Subject Classification}: 13A15, 13B99, 05C99.}}}}

\author{{\normalsize  {\sc Hana Safari$^{a}$, Farzad Shaveisi$^{a,}$\footnote{Corresponding author.}, Reza Nikandish$^{b}$}
}\vspace{3mm}\\
{\footnotesize{${}^{\mathsf{a}}$\it Department of Mathematics, Faculty of Science, Razi
University, Kermanshah, Iran}}\\
{\footnotesize{${}^{\mathsf{b}}$\it Department of Mathematics,
Jundi-Shapur University of Technology,  Dezful,
Iran}}\\
{\footnotesize{$\mathsf{h.safari@razi.ac.ir, f.shaveisi@razi.ac.ir, r.nikandish@ipm.ir}$}}}
\date{}

\begin{document}

\maketitle

%%%%%%%%%%%%%%%%%%%%%%%%%%%%%%%%%%%%%%%%%%%%%%%%%%%%%%%%%%%%%%%%%%%%%%%%%%%%%%%%%%%%%%%%%%%%%%%%%%%%%%%%%%%%%%%%%%%%%%%%%%%%%%%%%%%%%%%%%%%%%%%%%%%%%%%
\begin{abstract}
{\small Let $R$ be a commutative ring with identity, and let $Z(R)$ be the set of zero-divisors of $R$. The inclusion graph of annihilators in $R$, denoted by $\Gamma^{^{\prime}}(R)$, is a graph with the vertex set $Z(R)^*=Z(R)\setminus\{0\}$ and two distinct vertices $x$ and $y$ are adjacent if and only if $ann_R(x)\subseteq ann_R(y)$ or $ann_R(y)\subseteq ann_R(x)$.
It is proved that $\Gamma^{^{\prime}}(R)$ is not connected if and only if $R$ is reduced with $|\mathrm {Min}(R)|=2$.
Also, we show that if $\Gamma^{^{\prime}}(R)$ is a connected graph, then  the diameter of $\Gamma^{^{\prime}}(R)$ is at most $4$ and the girth of $\Gamma^{^{\prime}}(R)$ is at most $6$, if it contains a cycle. Moreover, we study the affinity between inclusion graph of annihilators  and complement of the annihilator graph (a well-known graph with the same vertices and two distinct vertices $x$ and $y$ are adjacent if and only if $ann_R(xy)\neq ann_R(x)\cup ann_R(y)$) associated with a commutative ring. Finally, we characterize all rings whose inclusion graphs of annihilators are complete.\\\\
{\bf Key words:} Inclusion graph, Annihilator, Connected graph, Diameter, Girth.}
\end{abstract}
%{\bf Key words:} Inclusion graph, Annihilator, Connected graph, Diameter, Girth.}
%%%%%%%%%%%%%%%%%%%%%%%%%%%%%%%%%%%%%%%%%%%%%%%%%%%%%%%%%%%%%%%%%%%%%%%%%%%%%%%%%%%%%%%%%%%%%%%%%%%%%%%%%%%%%%%%%%%%%%%%%%%%%%%%%%%%%%%%%%%%%%%%%%%%%%%
\begin{center}\section{Introduction}\end{center}

Many interesting algebraic and combinatorial problems arise when we associate a
combinatorics object with an algebraic structure. Therefore, one of the most popular
and active areas in algebraic combinatorics is the study of graphs associated with
rings. Papers in this field apply combinatorial methods to obtain algebraic results
in ring theory (see for instance \cite{akbarii, anderson, Badawi, ab, hok,can}).
\par
Throughout this paper, $R$ is a commutative ring with unity unless otherwise specified. We denote by $\mathrm{Min}(R)$, $\mathrm {Max}(R)$, $\mathrm {Ass}(R)$, $Z(R)$, $\mathrm{Nil}(R)$ and $\mathrm{U}(R)$, the set of all minimal prime ideals of $R$, the set of all maximal ideals of $R$, the set of all associated prime ideals of $R$, the set of all zero-divisor elements of $R$, the set of all nilpotent elements of $R$ and the set of all invertible elements of $R$, respectively. The ring $R$ is said to be \textit{reduced} if it has no non-zero
nilpotent element. A nonzero ideal $I$ of $R$ is called \textit{essential} if $I$ has a non-zero intersection with any nonzero ideal of $R$. For every subset $A$ of $R$, we denote the  annihilator of $A$ by $ann_R(A)$. Moreover, for the subset $A$ of $R$, we let $A^*=A\setminus\{0\}$. For any undefined notation or terminology in ring theory, we refer the reader to \cite{ati, sharp}.
\par
  Let $G=(V,E)$ be a graph, where $V=V(G)$ is the set of vertices and $E=E(G)$ is the set of edges.  By $\overline{G}$, we mean the complement graph of $G$. The  diameter and the girth  of a graph $G$ are denoted by $\mathrm {diam}(G)$ and $\mathrm {gr}(G)$, respectively. We write $u-v$, to denote an edge with ends $u,v$. A graph $H=(V_0,E_0)$ is called a \textit{subgraph of} $G$ if $V_0\subseteq V$ and $E_0 \subseteq E$. Moreover, $H$ is called an \textit{induced subgraph by} $V_0$,  denoted by $G[V_0]$, if $V_0\subseteq V$ and $E_0=\{\{u,v\}\in E\, |\,u,v\in V_0\}$. Also $G$ is called a \textit{null graph} if it has no edge. A complete bipartite graph of part sizes $m, n$ is denoted by $K_{m,n}$. If  $m=1$, then the complete bipartite graph is called a \textit{star graph}. Also, a complete graph and a cycle  with $n$ vertices are denoted by $K_n$ and $C_n$, respectively. For any undefined notation or terminology in graph theory, we refer the reader to \cite{west}.
  \par
Let $R$ be a commutative ring with $1\neq 0$. The concept of  annihilator graph of a commutative ring $R$, denoted by $AG(R)$, was first introduced and studied by Badawi in \cite{Badawi}. The \textit{annihilator graph} $AG(R)$ is an undirected graph whose vertex set is $Z(R)^*$ and two distinct vertices $x$ and $y$ are adjacent if and only if ${\rm ann}_R(xy)\neq {\rm ann}_R(x)\cup {\rm ann}_R(y)$. More results on annihilator graph of $R$ may be found in \cite{af, Coloring, nikm}. This graph has inspired a significant body of subsequent work,
including studies on its metric dimension and its complements \cite{13,4}. On the other hand, the notion of inclusion ideal graph of a ring $R$ (not necessarily commutative) was first introduced in \cite{akm}. Let $R$ be a ring with unity (not necessarily commutative). The \textit{inclusion ideal graph of $R$}, denoted by $In(R)$,
is a graph whose vertices are all nontrivial left ideals of $R$ and two distinct left ideals
$I$ and $J$ are adjacent if and only if $I \subseteq J$ or $J \subseteq I$. Motivated by these papers, we introduce and study another kind of inclusion graphs, called inclusion graph of annihilators in a commutative ring, which contains the complement of the annihilator graph as a subgraph. The \textit{inclusion graph of annihilators} of a commutative ring  $R$ is a graph $\Gamma^{^{\prime}}(R)$ with the vertex set $Z(R)^*$ and two distinct vertices $x$ and $y$ are adjacent if and only if either ${\rm ann}_R(x)\subseteq {\rm ann}_R(y)$ or ${\rm ann}_R(y)\subseteq {\rm ann}_R(x)$.
In Section 2, we study the connectedness, diameter and girth of $\Gamma^{^{\prime}}(R)$.  In Section 3, we give conditions under which $\Gamma^{^{\prime}}(R)$  and $\overline{AG(R)}$ are identical. In Section 4, we study complete inclusion graphs of annihilators.
%%%%%%%%%%%%%%%%%%%%%%%%%%%%%%%%%%%%%%%%%%%%%%%%%%%%%%%%%%%%%%%%%%%%%%%%%%%%%%%%%%%%%%%%%%%%%%%%%%%%%%%%%%%%%%%%%%%%%%%%%%%%%%%%%%%%%%%%%%%%%%%%%%%%%%%
%%%%%%%%%%%%%%%%%%%%%%%%%%%%%%%%%%%%%%%%%%%%%%%%%%%%%%%%%%%%%%%%%%%%%%%%%%%%%%%%%%%%%%%%%%%%%%%%%%%%%%%%%%%%%%%%%%%%%%%%%%%%%%%%%%%%%%%%%%%%%%%%%%%%%%%
%%%%%%%%%%%%%%%%%%%%%%%%%%%%%%%%%%%%%%%%%%%%%%%%%%%%%%%%%%%%%%%%%%%%%%%%%%%%%%%%%%%%%%%%%%%%%%%%%%%%%%%%%%%%%%%%%%%%%%%%%%%%%%%%%%%%%%%%%%%%%%%%%%%%%
{\section{Connectedness, diameter and girth of $\Gamma^{^{\prime}}(R)$}}\vspace{-2mm}
In this section, basic properties of $\Gamma^{^{\prime}}(R)$ are studied. It is proved that $\Gamma^{^{\prime}}(R)$ is not connected if and only if $R$ is reduced with $|\mathrm {Min}(R)|=2$. Also, we show that gr$(\Gamma^{^{\prime}}(R))\in \{3,6,\infty \} $ and $\mathrm{diam}(\Gamma^{^{\prime}}(R))\leq 4$, if $\Gamma^{^{\prime}}(R)$ is a connected graph. Furthermore, we classify all rings $R$ in terms of gr$(\Gamma^{^{\prime}}(R))$.

We start with the following result which investigates the connectedness and diameter of $(\Gamma^{^{\prime}}(R))$.
 \begin{thm}\label{diam}
  Let $R$ be a ring. Then  $\Gamma^{^{\prime}}(R)$ is not connected if and only if $R$ is reduced with $|\mathrm {Min}(R)|=2$. Moreover, $\mathrm{diam}(\Gamma^{^{\prime}}(R))\leq 4$, if $\Gamma^{^{\prime}}(R)$ is connected.
\end{thm}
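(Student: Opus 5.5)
I will prove the two directions of the connectedness statement separately and then extract the diameter bound from the connecting paths built along the way. The basic device is this: if $xy=0$ with $x,y\neq 0$, then for any nonzero $z\in ann_R(x)$ we have $ann_R(x)\subseteq ann_R(xz')$ type inclusions. More concretely, for any $a,b$ with $ab\neq 0$ we always have $ann_R(a)\subseteq ann_R(ab)$, so $a - ab$ is an edge (or $a=ab$) whenever $ab\in Z(R)^*$.

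\emph{Sufficiency.} Suppose $R$ is reduced with $\mathrm{Min}(R)=\{P_1,P_2\}$. Then $P_1\cap P_2=0$, $Z(R)=P_1\cup P_2$, and for $x\in P_1\setminus P_2$ we get $ann_R(x)=P_2$, since $xy=0$ forces $y\in P_2$, and conversely $xP_2\subseteq P_1\cap P_2=0$. Symmetrically $ann_R(y)=P_1$ for $y\in P_2\setminus P_1$. The sets $P_1^*$ and $P_2^*$ are disjoint because $P_1\cap P_2=0$, and both are nonempty. The annihilators $P_1$ and $P_2$ are incomparable, so there is no edge between the parts, while each part is complete. Hence $\Gamma'(R)$ is disconnected and equals $K_m\cup K_n$.

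\emph{Necessity and diameter.} Assume $R$ is not reduced, or is reduced with $|\mathrm{Min}(R)|\neq 2$. I will show that any two vertices $x,y$ are joined by a path of length at most $4$.

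The key observation is the edge $x - xz$ for every $z$ with $xz\neq 0$. Also, if $xz=0$ then for a nonzero nilpotent-type element the inclusions behave well. I split into cases.

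\textbf{Case 1: $R$ is not reduced.} Take $0\neq n\in\mathrm{Nil}(R)$ with $n^2=0$. For a vertex $x$, either $xn=0$ or $xn\neq 0$.
\begin{itemize}
\item If $xn\neq 0$, then $x - xn$ is an edge. Since $ann_R(xn)\supseteq ann_R(n)\ni n$, we also get that $xn$ is adjacent to $n$ or equals it, provided $ann_R(n)\subseteq ann_R(xn)$, which holds.
\item If $xn=0$, pick $w\in ann_R(x)$; this case needs more care.
\end{itemize}
The plan is to route everything through a common element of small annihilator, such as a suitable nilpotent $n$ with $ann_R(n)$ maximal. If $ann_R(n)$ is maximal among annihilators, then every $z$ with $ann_R(n)\subseteq ann_R(z)$ is adjacent to $n$. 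This gives paths $x - xn - n - yn - y$ of length $\le 4$ in the subcase $xn,yn\neq 0$. When $xn=0$, I will instead use an element $x+n$ or $n$ itself: $ann_R(x)\cap ann_R(n)\subseteq ann_R(\cdot)$ tricks. I expect this mixed subcase to be the main obstacle, and I would first try to choose $n$ inside $ann_R(x)$ so that $ann_R(x)\subseteq ann_R(n)$ fails less drastically.

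\textbf{Case 2: $R$ is reduced with $|\mathrm{Min}(R)|\ge 3$.} For a vertex $x$, choose $y$ with $xy=0$. Take $z$ so that $xz\neq 0$ and $yz\neq 0$; this is possible because there are at least three minimal primes, and one can pick $z$ outside the union-avoiding primes. Then $x - xz$ and $y - yz$ are edges, and the remaining task is to connect $xz$ and $yz$ via a common multiple-type vertex. For two arbitrary vertices $a,b$ I aim for $a - ac - c' - bc - b$, using an element $c$ with $ac, bc \neq 0$ and a middle vertex whose annihilator is comparable to both. In the reduced case, $ann(u)\subseteq ann(uv)$ and the primes structure (annihilators are intersections of minimal primes, in the Noetherian intuition) should make a middle vertex $c' = abc$-type available when $abc\ne 0$. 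When $abc=0$ I will use the third minimal prime to choose a different $c$. The case $|\mathrm{Min}(R)|=1$ in a reduced ring means a domain, so the graph is empty and vacuous. Finally, I will verify that the length bound of $4$ is attained only by these constructions.
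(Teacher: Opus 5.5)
\textbf{There is a genuine gap: the ``only if'' direction and the diameter bound are not proved.} Your sufficiency argument is correct and is exactly the paper's: for $R$ reduced with $\mathrm{Min}(R)=\{P_1,P_2\}$ you get $\Gamma'(R)=K_m\cup K_n$.

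\emph{Case 1.} You leave the subcase $xn=0$ open, and routing through nilpotents cannot close it. If $\mathrm{Nil}(R)\subseteq {\rm ann}_R(x)$, then $xn=0$ for every nilpotent $n$. For example, in $\mathbb{Z}_2\times\mathbb{Z}_4$ take $x=(1,0)$ and $n=(0,2)$: the annihilators $0\times\mathbb{Z}_4$ and $\mathbb{Z}_2\times 2\mathbb{Z}_4$ are incomparable, so $x$ is not adjacent to $n$ either.

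\emph{Case 2.} The existence of $c$ is only asserted. The proposed middle vertex $abc$ is useless whenever $ab=0$, which is typical: for $a=(1,0,0)$ and $b=(0,1,0)$ in $\mathbb{Z}_2^3$ it vanishes for every $c$. You overlooked that $c$ itself serves as the middle vertex, since ${\rm ann}_R(c)\subseteq{\rm ann}_R(ac)\cap{\rm ann}_R(bc)$, provided $c\in Z(R)$.

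\emph{The untreated case.} You never handle the situation in which no $c\in Z(R)$ with $xc\neq 0\neq yc$ exists, i.e.\ ${\rm ann}_R(x)\cup{\rm ann}_R(y)=Z(R)$. This is exactly where disconnection can occur, so without it the ``only if'' direction is untouched.

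\textbf{The missing idea} is to split a non-adjacent pair $x,y$ according to whether ${\rm ann}_R(x)\cup{\rm ann}_R(y)=Z(R)$.

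If not, any $z\in Z(R)\setminus({\rm ann}_R(x)\cup{\rm ann}_R(y))$ gives the walk $x-xz-z-yz-y$. This is the paper's first step.

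If so, every nonzero $r\in{\rm ann}_R(x+y)$ is a zero-divisor, hence lies in ${\rm ann}_R(x)$ or ${\rm ann}_R(y)$, hence in both. Thus ${\rm ann}_R(x+y)={\rm ann}_R(x)\cap{\rm ann}_R(y)$, and $x-(x+y)-y$ is a path whenever this intersection is nonzero.

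When the intersection is zero, argue as follows.
\begin{enumerate}
\item Each minimal prime $\mathfrak p$ consists of zero-divisors, so it lies in ${\rm ann}_R(x)\cup{\rm ann}_R(y)$, hence in one of them.
\item Since ${\rm ann}_R(x)\,{\rm ann}_R(y)=0\subseteq\mathfrak p$ and ${\rm ann}_R(x)\cap{\rm ann}_R(y)=0$, $\mathfrak p$ must equal that annihilator. Hence $\mathrm{Min}(R)\subseteq\{{\rm ann}_R(x),{\rm ann}_R(y)\}$.
\item If $|\mathrm{Min}(R)|=2$, then $\mathrm{Nil}(R)={\rm ann}_R(x)\cap{\rm ann}_R(y)=0$, so $R$ is reduced with two minimal primes, the excluded case.
\item If $\mathrm{Min}(R)=\{\mathfrak p\}$, then $x$ or $y$ lies in $\mathfrak p=\mathrm{Nil}(R)$; otherwise $Z(R)\subseteq\mathfrak p$ would force $x\in\mathfrak p$. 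Since $Z(R)+\mathrm{Nil}(R)\subseteq Z(R)$, we get $x+y\in Z(R)^*$, contradicting ${\rm ann}_R(x+y)=0$.
\end{enumerate}

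The paper reaches the same conclusions by a case analysis on $|\mathrm{Min}(R)|$, using prime avoidance and the same element $c=x+y$.

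For your reduced Case 2 alone there is also a direct fix. Pick minimal primes $\mathfrak p\not\ni a$ and $\mathfrak q\not\ni b$, a third minimal prime $\mathfrak r$, and $c\in\mathfrak r\setminus(\mathfrak p\cup\mathfrak q)$. Then $c\in Z(R)$, $ac\neq 0\neq bc$, and $a-ac-c-bc-b$ is a walk of length at most $4$.
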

\begin{proof}
{ Suppose that $x,y$ are non-adjacent vertices of $\Gamma^{^{\prime}}(R)$.
  First assume that ${\rm ann}_R(x)\cup {\rm ann}_R(y)\neq Z(R)$ and let $z\in Z(R)\setminus {\rm ann}_R(x)\cup {\rm ann}_R(y)$. It is easily checked that  $x-xz-z-yz-y$ is a path connecting $x$ and $y$. Hence, we assume  ${\rm ann}_R(x)\cup {\rm ann}_R(y)= Z(R)$ and continue the proof in  the following cases:

\textbf{Case 1.} $|\mathrm {Min}(R)|\geq 3$.  Since ${\rm ann}_R(x)\cup {\rm ann}_R(y)= Z(R)$,  \cite[Theorem 2.1]{Huckaba} and prime avoidance Theorem (see \cite[Theorem 3.61]{sharp}) imply that $\mathfrak{p} \subseteq {\rm ann}_R(x)$ or $\mathfrak{p} \subseteq {\rm ann}_R(y)$, for every $\mathfrak{p} \in \mathrm {Min}(R)$. Without loss of generality, assume that $\mathfrak{p}_1,\mathfrak{p}_2\subseteq {\rm ann}_R(x)$, for some  $\mathfrak{p}_1,\mathfrak{p}_2 \in \mathrm {Min}(R)$. If $\mathfrak{p}_1 \subseteq {\rm ann}_R(y)$ or $\mathfrak{p}_2\subseteq {\rm ann}_R(y)$, then since ${\rm ann}_R(x)\nsubseteq {\rm ann}_R(y)$, we deduce that  $x-z-y$ is  a path from $x$ to $y$, for  every  $z\in {\rm ann}_R(x)\setminus {\rm ann}_R(y)$. If  $\mathfrak{p}_1 \nsubseteq {\rm ann}_R(y)$ and $\mathfrak{p}_2\nsubseteq {\rm ann}_R(y)$, then  $x-z-yz-y$ is a path between $x$ and $y$,  for every $z\in \mathfrak{p}_1\setminus \mathfrak{p}_2\cup {\rm ann}_R(y)$.

\textbf{Case 2.} $\mathrm {Min}(R)=\{\mathfrak{p}_1,\mathfrak{p}_2\}$. If $R$ is a reduced ring, then $\mathfrak{p}_1\cap \mathfrak{p}_2=(0)$ and by \cite[Corollary 2.4]{Huckaba}, $Z(R)=\mathfrak{p}_1\cup \mathfrak{p}_2$. It is not hard to see that, in this case,  $\Gamma^{^{\prime}}(R)=K_{{|\mathfrak{p}_1}^*|}\cup K_{{|\mathfrak{p}_2}^*|}$ and thus  $\Gamma^{^{\prime}}(R)$ is not connected. Hence suppose that $R$ is a non-reduced ring and consider the following subcases:

\textbf{Subcase 1.} $\mathfrak{p}_1,\mathfrak{p}_2\subseteq {\rm ann}_R(x)$. If $\mathfrak{p}_1 \subseteq {\rm ann}_R(y)$ or $\mathfrak{p}_2\subseteq {\rm ann}_R(y)$, then since ${\rm ann}_R(x)\nsubseteq {\rm ann}_R(y)$,  $x-z-y$ is  a path from $x$ to $y$, for  every  $z\in {\rm ann}_R(x)\setminus {\rm ann}_R(y)$.
If  $\mathfrak{p}_1 \nsubseteq {\rm ann}_R(y)$ and $\mathfrak{p}_2\nsubseteq {\rm ann}_R(y)$, then $x-z-yz-y$ is a path connecting $x$ and $y$, for every $z\in \mathfrak{p}_1\setminus \mathfrak{p}_2\cup {\rm ann}_R(y)$. If $\mathfrak{p}_1, \mathfrak{p}_2\subseteq {\rm ann}_R(y)$, the proof is similar.

\textbf{Subcase 2.} $\mathfrak{p}_1 \subseteq {\rm ann}_R(x)$ and $\mathfrak{p}_2\subseteq {\rm ann}_R(y)$. If $Z(R)\neq \mathfrak{p}_1\cup \mathfrak{p}_2$,
then  $x-z-y$ is  a path connecting $x$ and  $y$, for every $z\in Z(R)\setminus \mathfrak{p}_1\cup \mathfrak{p}_2$. If $Z(R)=\mathfrak{p}_1\cup \mathfrak{p}_2$, then  $\mathfrak{p}_1={\rm ann}_R(x)$ and ${\rm ann}_R(y)= \mathfrak{p}_2$.
Let $x+y=c$. We show that ${\rm ann}_R(x)\cap {\rm ann}_R(y)={\rm ann}_R(c)$. Clearly,  ${\rm ann}_R(x)\cap {\rm ann}_R(y)\subseteq {\rm ann}_R(c)$. If $r \in {\rm ann}_R(c) $ such that either $r\not\in {\rm ann}_R(x)$ or $r\not\in {\rm ann}_R(y)$, then $rx=-ry\neq 0$. This implies that ${\rm ann}_R(x)= {\rm ann}_R(y)$, a contradiction.
 Hence ${\rm ann}_R(x)\cap {\rm ann}_R(y)={\rm ann}_R(c)$ and so  $x-c-y$ is a path from $x$ to $y$. If $\mathfrak{p}_1 \subseteq {\rm ann}_R(y)$ and $\mathfrak{p}_2\subseteq {\rm ann}_R(x)$, the proof is similar.

\textbf{Case 3.} $\mathrm {Min}(R)=\{\mathfrak{p}\}$. If $x,y\not\in \mathfrak{p}$, then  ${\rm ann}_R(x)\cup {\rm ann}_R(y)= Z(R)$ implies that $Z(R)\subseteq \mathfrak{p}$, a contradiction. Thus let $x\in \mathfrak{p}$. Hence   $x+y=c\in Z(R)$ and so  $x-c-y$ is a path from $x$ to $y$.

The above argument shows that $\Gamma^{^{\prime}}(R)$ is not connected if and only if $R$ is reduced with $|\mathrm {Min}(R)|=2$ and $\mathrm{diam}(\Gamma^{^{\prime}}(R))\leq 4$, if $\Gamma^{^{\prime}}(R)$ is connected.
 }
\end{proof}

The next result states that if
 $\mathrm{diam}(\Gamma^{^{\prime}}(R))\leq 2$, then  $Z({\rm ann}_R(x))=Z(R)$, for some $x\in Z(R)$.

\begin{thm}\label{starccAGgg}
  Let $R$ be a ring. If
 $\mathrm{diam}(\Gamma^{^{\prime}}(R))\leq 2$, then $Z({\rm ann}_R(x))=Z(R)$, for some $x\in Z(R)$.
\end{thm}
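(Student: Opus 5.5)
Throughout I read $Z({\rm ann}_R(x))$ as the set of zero-divisors of the $R$-module ${\rm ann}_R(x)$, i.e.\ those $r\in R$ with $ra=0$ for some $0\neq a\in {\rm ann}_R(x)$. The inclusion $Z({\rm ann}_R(x))\subseteq Z(R)$ always holds, so only the reverse inclusion needs proof, and I would argue by contradiction. Assume that for every $x\in Z(R)^*$ there is some $z_x\in Z(R)$ with $z_x\notin Z({\rm ann}_R(x))$, which says exactly ${\rm ann}_R(x)\cap{\rm ann}_R(z_x)=(0)$. Then $z_x\neq 0$, and since $z_x$ is a zero-divisor, ${\rm ann}_R(z_x)\neq(0)$. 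The key observation is that $x$ and $z_x$ are non-adjacent in $\Gamma^{^{\prime}}(R)$. Indeed, an inclusion ${\rm ann}_R(x)\subseteq{\rm ann}_R(z_x)$ or the reverse would force one of these two nonzero annihilators to equal their intersection $(0)$.

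Next I use $\mathrm{diam}(\Gamma^{^{\prime}}(R))\leq 2$ to get a common neighbour $w$ of $x$ and $z:=z_x$, and sort the four possible orientations of the two edges.
\begin{itemize}
\item If ${\rm ann}_R(w)\subseteq{\rm ann}_R(x)\cap{\rm ann}_R(z)$, then ${\rm ann}_R(w)=(0)$, contradicting $w\in Z(R)^*$.
\item The two mixed cases give a chain such as ${\rm ann}_R(x)\subseteq{\rm ann}_R(w)\subseteq{\rm ann}_R(z)$, which again puts a nonzero annihilator inside $(0)$.
\end{itemize}
So necessarily ${\rm ann}_R(x)+{\rm ann}_R(z)\subseteq{\rm ann}_R(w)$. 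The same argument applies to any pair of vertices whose annihilators meet trivially: all of their common neighbours have annihilators containing both.

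Now apply the hypothesis to $w$ itself. This gives $u=z_w$ with ${\rm ann}_R(u)\cap{\rm ann}_R(w)=(0)$. Since ${\rm ann}_R(w)$ contains both ${\rm ann}_R(x)$ and ${\rm ann}_R(z)$, the annihilator of $u$ meets ${\rm ann}_R(x)$, ${\rm ann}_R(z)$ and ${\rm ann}_R(w)$ trivially. To make this concrete, pick $a\in{\rm ann}_R(x)^*$ and $b\in{\rm ann}_R(z)^*$. Then $ab\in{\rm ann}_R(x)\cap{\rm ann}_R(z)=(0)$. Also $a,b\in{\rm ann}_R(w)$, so ${\rm ann}_R(u)$ contains neither $a$ nor $b$, nor any nonzero element of ${\rm ann}_R(w)$. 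I would then apply the same common-neighbour argument to the non-adjacent pairs $\{u,w\}$, $\{u,a\}$ and $\{a,z\}$ (the last has $a\notin Z({\rm ann}_R(z))$-type behaviour to be checked). The aim is to reach a vertex $v$ whose annihilator contains a nonzero element together with a zero-divisor on it, while also being forced to meet some other annihilator trivially.

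The main obstacle is this final step. The bare case analysis only produces an ascending sequence ${\rm ann}_R(x)+{\rm ann}_R(z)\subseteq{\rm ann}_R(w)\subseteq\cdots$ with no a priori termination, since $R$ need not be Noetherian. My first attempt would use the specific elements $a,b$ with $ab=0$: $x,b\in{\rm ann}_R(a)$ and $z,a\in{\rm ann}_R(b)$, and I would test whether $a$ or $b$ can serve as the required $x$ directly. That is, I would check whether every zero-divisor must kill a nonzero element of ${\rm ann}_R(a)$, using that any $r$ with ${\rm ann}_R(r)\cap{\rm ann}_R(a)=(0)$ would, by the argument above, need a common neighbour with annihilator containing both $b$ and $x$. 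If that fails, the fallback is a Zorn-type choice of $x$ with ${\rm ann}_R(x)$ maximal among annihilators of nonzero zero-divisors. For such an $x$, the inclusion ${\rm ann}_R(x)\subseteq{\rm ann}_R(w)$ forces equality, so ${\rm ann}_R(w)\cap{\rm ann}_R(z)={\rm ann}_R(x)\cap{\rm ann}_R(z)=(0)$. But ${\rm ann}_R(z)\subseteq{\rm ann}_R(w)$ and ${\rm ann}_R(z)\neq(0)$, so this intersection is ${\rm ann}_R(z)\neq(0)$, which is the desired contradiction. The cost of this route is justifying the existence of such a maximal element.
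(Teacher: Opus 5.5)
\textbf{There is a genuine gap, and it cannot be closed: for nonzero $x$ the statement is false.}

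Your reduction is correct, and it is exactly the paper's argument. The vertices $x$ and $z_x$ are non-adjacent, and every common neighbour $w$ must satisfy ${\rm ann}_R(x)+{\rm ann}_R(z_x)\subseteq{\rm ann}_R(w)$. Iterating gives a strictly increasing chain ${\rm ann}_R(w_1)\subsetneq{\rm ann}_R(w_2)\subsetneq\cdots$. The paper ends with ``by continuing this procedure'', which is precisely the termination step you flag as the obstacle.

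Your maximal-element finish is a complete proof whenever $\{{\rm ann}_R(x): x\in Z(R)^*\}$ has a maximal member, for example when $R$ is Noetherian or has ACC on annihilators. Zorn's lemma does not supply one, however: the union of such a chain need not lie in ${\rm ann}_R(w)$ for any $w\neq 0$.

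Separately, the non-reduced case needs no diameter hypothesis. For $0\neq x\in\mathrm{Nil}(R)$ the ideal ${\rm ann}_R(x)$ is essential (Lemma~\ref{subgraph3}(1)). So it meets every ${\rm ann}_R(y)\neq(0)$, and $Z({\rm ann}_R(x))=Z(R)$.

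The reduced case cannot be rescued. Here is a counterexample.
\begin{itemize}
\item \emph{The ring.} Let $Y=\beta[0,\infty)\setminus[0,\infty)$ and $R=C(Y)$. $Y$ is a connected compact $F$-space (Gillman--Jerison 14.27). It is connected because it is the intersection of the continua $\mathrm{cl}_{\beta}[n,\infty)$.
\item \emph{Basic facts.} Write $\mathrm{coz}(f)=\{f\neq 0\}$. Then $f\in Z(R)$ iff $\mathrm{coz}(f)$ is not dense. Also ${\rm ann}_R(f)\cap{\rm ann}_R(g)=(0)$ iff $\mathrm{coz}(f)\cup\mathrm{coz}(g)$ is dense.
\item \emph{Diameter at most $2$.} Let $f,g\in Z(R)^*$ be non-adjacent.
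If $fg\neq 0$, then $f-fg-g$ is a path; $fg$ is distinct from $f$ and $g$ by non-adjacency.
If $fg=0$, the disjoint cozero sets $\mathrm{coz}(f)$ and $\mathrm{coz}(g)$ are completely separated, since $Y$ is an $F$-space. Hence they have disjoint closures, and by connectedness their union is not dense. Then $f+g\in Z(R)^*$, ${\rm ann}_R(f+g)={\rm ann}_R(f)\cap{\rm ann}_R(g)$, and $f-(f+g)-g$ is a path.
So $\mathrm{diam}(\Gamma^{\prime}(R))\leq 2$.
\item \emph{The conclusion fails for every nonzero $x$.} Take any $f\in Z(R)^*$ and pick $a$ with $f(a)\neq 0$. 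Set $\varphi=\min(|f|/|f(a)|,1)$ and $g=\max(\frac12-\varphi,0)$.
Then $g=\frac12$ on $f^{-1}(0)\neq\emptyset$, so $g\neq 0$.
Also $g$ vanishes on the neighbourhood $\{\varphi>\frac12\}$ of $a$, so $g\in Z(R)^*$.
Finally $\mathrm{coz}(f)\cup\mathrm{coz}(g)=Y$, so ${\rm ann}_R(f)\cap{\rm ann}_R(g)=(0)$ and $g\notin Z({\rm ann}_R(f))$.
\end{itemize}

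Hence only $x=0$ satisfies the conclusion, and only trivially, since ${\rm ann}_R(0)=R$. Neither of your proposed finishes can work: testing $a$ or $b$ fails, and so does a Zorn-type maximal annihilator.

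What your argument does establish is the theorem under the extra hypothesis that the annihilators of nonzero zero-divisors have a maximal element. The non-reduced case holds separately by the nilpotent argument above.
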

\begin{proof}
{ Assume that  $\mathrm{diam}(\Gamma^{^{\prime}}(R))\leq 2$. If there exists $x\in Z(R)$ such that $x$ is adjacent to every other vertex, then
${\rm ann}_R(x)\cap {\rm ann}_R(y)\neq 0$, for every $y\in Z(R)$. We show that $Z({\rm ann}_R(x))=Z(R)$. Let $y\in Z(R)\setminus Z({\rm ann}_R(x))$. Since ${\rm ann}_R(x)\cap {\rm ann}_R(y) \neq 0$, $ay=0$, for some $0\neq a \in {\rm ann}_R(x) $. This implies that $y\in Z({\rm ann}_R(x))$, a contradiction. Thus $Z({\rm ann}_R(x))=Z(R)$.
Hence we may suppose that $\mathrm{diam}(\Gamma^{^{\prime}}(R))=2$ and ${\rm ann}_R(a)\cap {\rm ann}_R(b)= 0$, for some $a,b \in Z(R)^*$. Since $\mathrm{diam}(\Gamma^{^{\prime}}(R))=2$, there exists $c\in Z(R)^*$ such that $a-c-b$ is a path from $a$ to $b$ and so ${\rm ann}_R(a)\cup {\rm ann}_R(b)\subseteq {\rm ann}_R(c)$. If ${\rm ann}_R(c)\cap {\rm ann}_R(y) \neq 0$, for every $y\in Z(R)^*$, then $Z({\rm ann}_R(c))=Z(R)$ and if ${\rm ann}_R(c)\cap {\rm ann}_R(y) = 0$, for some $y\in Z(R)^*$, then there exists $d\in Z(R)^*$ such that $c-d-y$ is a path from $c$ to $y$. If ${\rm ann}_R(d)\cap {\rm ann}_R(z) \neq 0$, for every $z\in Z(R)^*$, then  $Z({\rm ann}_R(d))=Z(R)$ and if ${\rm ann}_R(d)\cap {\rm ann}_R(z) = 0$,
 for some $z\in Z(R)^*$, then there exists $e\in Z(R)^*$ such that $d-e-z$ is a path from $e$ to $z$. By continuing this procedure,  $Z({\rm ann}_R(x))=Z(R)$, for some $x\in Z(R)$.}
\end{proof}

The next example shows that the converse of Theorem \ref{starccAGgg} does not hold.
\begin{example}

Let $R=\mathbb{Z}_4\times  \mathbb{Z}_4$ and $x=(2,2)$. Then $Z({\rm ann}_R(x))=Z(R)$, but $\mathrm{diam}(\Gamma^{^{\prime}}(R))=4$.
\end{example}
\begin{thm}\label{starccAGd}
  Let $R$ be a ring. Then
 $\mathrm{diam}(\Gamma^{^{\prime}}(R))\leq 2$ if and only if for every  two distinct elements $x,y\in Z(R)^*$ one of  the following statements holds.

$(1)$ for some $z\in Z(R)$, $  {\rm ann}_R(z)\subseteq {\rm ann}_R(x)\cap {\rm ann}_R(y) $, or

$(2)$ $  {\rm ann}_R( {\rm ann}_R(x)+ {\rm ann}_R(y))\neq 0 $.
\end{thm}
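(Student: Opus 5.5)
The plan is to unwind the definition of distance at most $2$ in $\Gamma^{^{\prime}}(R)$ and match the possible ways a path of length $\leq 2$ can arise with conditions $(1)$ and $(2)$. No earlier result of the paper is needed beyond the definitions; the only subtlety is bookkeeping about whether the witness $z$ is a genuine vertex distinct from $x$ and $y$.

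\emph{Necessity.} Assume $\mathrm{diam}(\Gamma^{^{\prime}}(R))\leq 2$ and let $x\neq y$ in $Z(R)^*$.
\begin{itemize}
\item If $x$ and $y$ are adjacent, say ${\rm ann}_R(x)\subseteq {\rm ann}_R(y)$, then $z=x$ gives ${\rm ann}_R(z)={\rm ann}_R(x)\cap {\rm ann}_R(y)$, so $(1)$ holds.
\item Otherwise there is a vertex $z$ with $x-z-y$, and I split by the direction of the two inclusions.
\begin{itemize}
\item If ${\rm ann}_R(z)\subseteq {\rm ann}_R(x)$ and ${\rm ann}_R(z)\subseteq {\rm ann}_R(y)$, then $(1)$ holds.
\item If ${\rm ann}_R(x)\subseteq {\rm ann}_R(z)$ and ${\rm ann}_R(y)\subseteq {\rm ann}_R(z)$, then $z{\rm ann}_R(x)=z{\rm ann}_R(y)=0$. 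Hence $0\neq z\in {\rm ann}_R({\rm ann}_R(x)+{\rm ann}_R(y))$, and $(2)$ holds.
\item The mixed cases ${\rm ann}_R(x)\subseteq{\rm ann}_R(z)\subseteq{\rm ann}_R(y)$ (or the reverse) would make $x$ and $y$ adjacent by transitivity of inclusion. This contradicts non-adjacency, so they do not occur.
\end{itemize}
\end{itemize}

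\emph{Sufficiency.} Let $x\neq y$ in $Z(R)^*$ and suppose $(1)$ or $(2)$ holds.
\begin{itemize}
\item In case $(1)$, the element $z$ is nonzero, since ${\rm ann}_R(0)=R\not\subseteq {\rm ann}_R(x)$ because $x\neq 0$. Also $z\in Z(R)$, so $z$ is a vertex.
\begin{itemize}
\item If $z=x$, then ${\rm ann}_R(x)\subseteq {\rm ann}_R(y)$ and $x,y$ are adjacent; the case $z=y$ is symmetric.
\item Otherwise $x-z-y$ is a path, since ${\rm ann}_R(z)$ is contained in both annihilators.
\end{itemize}
\item In case $(2)$, choose $0\neq z$ with $z({\rm ann}_R(x)+{\rm ann}_R(y))=0$.
\begin{itemize}
\item Since $x\in Z(R)^*$, ${\rm ann}_R(x)\neq 0$. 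Thus $z$ kills a nonzero element, so $z\in Z(R)^*$ is a vertex.
\item Then ${\rm ann}_R(x),{\rm ann}_R(y)\subseteq {\rm ann}_R(z)$.
\item If $z=x$ this gives ${\rm ann}_R(y)\subseteq{\rm ann}_R(x)$, so $x,y$ are adjacent; the case $z=y$ is symmetric.
\item Otherwise $x-z-y$ is a path.
\end{itemize}
\end{itemize}
In every case $d(x,y)\leq 2$, so the graph is connected with diameter at most $2$.

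There is no serious obstacle. The main points of care are excluding the mixed-direction case in necessity, and checking in sufficiency that the witness $z$ is a nonzero zero-divisor. Where $z$ coincides with $x$ or $y$, the witness degenerates into a direct edge rather than a path of length $2$.
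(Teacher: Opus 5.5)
Your proof is correct and takes the same approach as the paper: examine the middle vertex $z$ of a path $x - z - y$ of length at most $2$, and read off condition $(1)$ or condition $(2)$ from the direction of the inclusions. The paper's version is terser. It skips the adjacent case, the mixed-inclusion cases, and the check in the converse that the witness $z$ is a nonzero zero-divisor possibly equal to $x$ or $y$. Your case analysis makes all of these explicit.
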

\begin{proof}
{Assume that  $\mathrm{diam}(\Gamma^{^{\prime}}(R))\leq 2$, $x,y\in Z(R)^*$ and there is no  $z\in Z(R)$ with the property  ${\rm ann}_R(z)\subseteq {\rm ann}_R(x)\cap {\rm ann}_R(y) $. Since  $d(x,y)=2$,  there exists   $c\in Z(R)$ such that  ${\rm ann}_R(x)\cup {\rm ann}_R(y) \subseteq {\rm ann}_R(c)$. This implies that ${\rm ann}_R( {\rm ann}_R(x)+ {\rm ann}_R(y))\neq 0 $.

The converse is clear.}
\end{proof}
Next, we study the girth of $(\Gamma^{^{\prime}}(R))$.
 \begin{thm}\label{girth}
  Let $R$ be a ring. Then   gr$(\Gamma^{^{\prime}}(R))=\{3,6,\infty \} $.
\end{thm}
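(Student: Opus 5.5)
The proof rests on grouping vertices by their annihilators. Write $x\sim y$ when ${\rm ann}_R(x)={\rm ann}_R(y)$. Each $\sim$-class induces a complete subgraph of $\Gamma^{^{\prime}}(R)$. Two vertices from different classes are adjacent exactly when their annihilators are comparable. So $\Gamma^{^{\prime}}(R)$ is the comparability graph of the poset $\{{\rm ann}_R(x): x\in Z(R)^*\}$, with every point blown up into a clique. The plan is to show that if $\Gamma^{^{\prime}}(R)$ has a cycle but no triangle, then it has a $6$-cycle and no $4$-cycle or $8$-cycle.

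\textbf{Step 1: consequences of having no triangle.} Suppose $\Gamma^{^{\prime}}(R)$ has no triangle.
\begin{itemize}
\item A class with at least three elements is a triangle. A class $\{x,x'\}$ together with any neighbour $y$ of $x$ is also a triangle, since ${\rm ann}_R(x')={\rm ann}_R(x)$ forces $y$ to be adjacent to $x'$ as well. Hence every non-isolated vertex lies in a singleton class.
\item For any unit $u$ we have ${\rm ann}_R(ux)={\rm ann}_R(x)$. So every non-isolated $x$ satisfies $ux=x$ for all $u\in \mathrm{U}(R)$; in particular $2x=0$.
\item A chain ${\rm ann}_R(x)\subsetneq {\rm ann}_R(y)\subsetneq {\rm ann}_R(z)$ is a triangle. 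So the non-isolated part of the graph is bipartite, with parts $L$ (vertices whose annihilator is minimal) and $U$ (vertices whose annihilator is maximal).
\item Every cycle therefore has even length. By Theorem \ref{diam} the diameter is at most $4$, so $\mathrm{gr}\le 2\cdot 4+1=9$, and bipartiteness lowers this to $\mathrm{gr}\le 8$.
\end{itemize}
It remains to rule out girth $4$ and girth $8$.

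\textbf{Step 2: no $4$-cycle.} Suppose $x_1-y_1-x_2-y_2-x_1$ is a $4$-cycle with $x_1,x_2\in L$ and $y_1,y_2\in U$.
\begin{itemize}
\item Consider $w=x_1+x_2$. It is nonzero because $x_1\ne x_2$ and $2x_i=0$ make $x_2\neq -x_1$.
\item We have ${\rm ann}_R(w)\supseteq {\rm ann}_R(x_1)\cap {\rm ann}_R(x_2)$.
\item Also ${\rm ann}_R(y_j)\cdot x_i=0$ for each $i,j$. So $w$ is killed by ${\rm ann}_R(y_j)$ whenever ${\rm ann}_R(y_j)$ annihilates $x_1$ and $x_2$, and $y_jw=y_jx_1+y_jx_2$.
\end{itemize}
I would use these relations to show that ${\rm ann}_R(w)$ is comparable with ${\rm ann}_R(x_1)$ or with ${\rm ann}_R(y_1)$ while being different from both. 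That places $w$ in a class that produces a triangle, or in a chain of length $3$, contradicting Step 1.

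If this direct argument stalls, I would instead push Step 1 further to pin down $R$. The conditions "every unit fixes $x$" and "$2x=0$" on $Z(R)$, together with finiteness of the relevant part of the graph, should force $R$ to be reduced with $|\mathrm{Min}(R)|\ge 3$ and $R/\mathfrak m\cong\mathbb Z_2$. I expect this to lead to $R\cong \mathbb Z_2\times\mathbb Z_2\times\mathbb Z_2$, or to show that every cycle passes through vertices behaving like its idempotents.

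\textbf{Step 3: a $6$-cycle and no $8$-cycle.} In $\mathbb Z_2^3$ the six nonzero zero-divisors form exactly a $C_6$, via $e_i-(e_i+e_j)-e_j-\cdots$. For general $R$, take a shortest cycle. Using the vertices $x_1+x_2$ and $y_1y_2$ built from consecutive vertices, together with the bound $\mathrm{diam}\le 4$, I would shorten any $8$-cycle to a $6$-cycle.

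\textbf{Main obstacle.} Excluding the $4$-cycle (and likewise the $8$-cycle) is the hard part, since it needs genuine ring-theoretic input beyond the poset structure. I would try the sum trick of Step 2 first, and fall back on the structural classification of triangle-free rings sketched there.
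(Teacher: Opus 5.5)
\textbf{There is a genuine gap: your argument stops exactly where the content of the theorem begins.} Step 1 is essentially sound, but it needs two small repairs.
\begin{itemize}
\item A two-element class $\{x,x'\}$ can be an isolated $K_2$, as in $\mathbb{Z}_3\times\mathbb{Z}_3$. So the singleton-class and $ux=x$ conclusions hold only for vertices lying on a cycle, not for all non-isolated vertices.
\item The bound $\mathrm{gr}\le 2\,\mathrm{diam}+1$ needs connectedness. The disconnected case of Theorem \ref{diam} must be handled separately; there the graph is a union of two cliques, so the girth is $3$ or $\infty$.
\end{itemize}
After these repairs, Step 1 only gets you to $\mathrm{gr}\in\{4,6,8\}$. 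The whole content of the theorem is excluding $4$ and $8$, and your proposal does not do this.

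Step 2 rests on a false relation. From $\mathrm{ann}_R(x_i)\subsetneq \mathrm{ann}_R(y_j)$ you cannot conclude $\mathrm{ann}_R(y_j)\,x_i=0$. That would mean $\mathrm{ann}_R(y_j)\subseteq \mathrm{ann}_R(x_i)$, the reverse inclusion, which contradicts strictness. What you can actually say about $w=x_1+x_2$ is $\mathrm{ann}_R(w)\supseteq \mathrm{ann}_R(x_1)\cap \mathrm{ann}_R(x_2)$, and this intersection can be $(0)$. So nothing guarantees that $w$ is even a zero-divisor, let alone that its annihilator is comparable with $\mathrm{ann}_R(x_1)$ or $\mathrm{ann}_R(y_1)$. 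The fallback in Step 2 and all of Step 3 (``I expect\ldots'', ``I would shorten\ldots'') state intentions, not arguments.

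\textbf{The missing idea is a way to manufacture triangles directly from the ring structure.} With it, no analysis of $4$- or $8$-cycles is needed. The paper splits by reduced versus non-reduced and by decomposition into indecomposable factors.
\begin{itemize}
\item For a non-nilpotent zero-divisor $x$ in an indecomposable ring, $x,x^2,x^3$ are distinct nonzero vertices, since otherwise $x$ or $x^2$ would be a nontrivial idempotent. Their annihilators satisfy $\mathrm{ann}_R(x)\subseteq \mathrm{ann}_R(x^2)\subseteq \mathrm{ann}_R(x^3)$, so they form a triangle.
\item In a product one uses units or idempotents. Examples are $(u_1,0),(u_2,0),(a,0)$ with $a$ a nonzero nilpotent of $R_1$; $(0,0,1),(0,0,x),(0,1,1)$ when a factor field has at least $3$ elements; and a chain of idempotents when there are at least $4$ factors.
\item If $Z(R)=\mathrm{Nil}(R)$ and $\mathrm{Nil}(R)^3\neq 0$, then $x,xy,xyz$ is a triangle.
\item If $Z(R)=\mathrm{Nil}(R)$ and $\mathrm{Nil}(R)^3=0$, some vertex has annihilator $Z(R)$ and is adjacent to everything, so any cycle already yields a triangle.
\end{itemize}
The survivors are products of two fields, which give disjoint cliques, and $\mathbb{Z}_2\times\mathbb{Z}_2\times\mathbb{Z}_2$, which gives $C_6$. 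Your observations that $ux=x$ for units and $2x=0$ point in this direction. Without the power and idempotent arguments, however, they do not pin down $R$.
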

\begin{proof}
{  Let $R$ be a ring and consider the following cases.

\textbf{Case 1.} $R$ is non-reduced. If $R\cong R_1\times R_2$, where $R_1$ and $R_2$ are two rings, then without loss of generality, one may suppose that
$a \in R_1$ is a non-zero element of $\mathrm{Nil}(R_1)$. Therefore, there exist $u_1, u_2 \in U(R_1)$. Thus the vertices $(u_1,0),(u_2,0),(a,0)$ forms a triangle, i.e., gr$(\Gamma^{^{\prime}}(R))=3$.  Hence we may assume that $R$ is indecomposable. If $Z(R)\neq \mathrm{Nil}(R)$, then there exists an element $x\in Z(R)\setminus \mathrm{Nil}(R)$. Thus $x-x^2-x^3-x$ is a triangle and so gr$(\Gamma^{^{\prime}}(R))=3$. Now, we assume that $Z(R)=\mathrm{Nil}(R)$. If $\mathrm{Nil}(R)^3 \neq (0)$, then there exist $x,y,z\in \mathrm{Nil}(R)$ such that $xyz\neq 0$ and so $x-xy-xyz-x$ is a triangle, i.e., $(\Gamma^{^{\prime}}(R))=3$. If $\mathrm{Nil}(R)^3= (0)$, then ${\rm ann}_R(x)=\mathrm{Nil}(R)$, for some $x\in \mathrm{Nil}(R)^*$. Hence $x$ is adjacent to every other vertex and so gr$(\Gamma^{^{\prime}}(R))=3$ or $\infty$.

\textbf{Case 2.} $R$ is reduced. If $R$  is indecomposable, then for every $x\in Z(R)^*$, we have $x^n\neq x^m$, where $n,m$ are distinct positive integers. Thus  $x-x^2-x^3-x$ is a triangle and so gr$(\Gamma^{^{\prime}}(R))=3$. Now assume that $R\cong R_1\times \cdots\times R_n$, where every $R_i$, $1\leq i \leq n$, is a ring and $n$ is a positive integer. If $n\geq 4$, then the vertices $(0,1,1,\dots,1), (0,0,1,\dots,1),(0,0,0,1,\dots,1)$ forms a triangle and so gr$(\Gamma^{^{\prime}}(R))=3$. To complete the proof consider the following subcases.

\textbf{Subcase 1.} $R\cong R_1\times R_2\times R_3$ and every $R_i$, $1\leq i \leq 3$,  is  indecomposable.
Suppose that $R_1$ is not a field and $a\in R$ is non-unit and non-zero. Since $R_1$ is indecomposable, the vertices $(a,0,0), (a^2,0,0),(a^3,0,0)$ forms a triangle, i.e.,  gr$(\Gamma^{^{\prime}}(R))=3$. Hence suppose that every $R_i$,  $1\leq i\leq 3$, is a field. If  $|R_3|\geq 3$, then  the vertices $(0,0,1),(0,0,x),(0,1,1)$ forms a triangle, where $x\in R_3^*$ and so gr$(\Gamma^{^{\prime}}(R))=3$. Thus one may assume that $R\cong \mathbb{Z}_2\times\mathbb{Z}_2\times\mathbb{Z}_2$. In this case, $\Gamma^{^{\prime}}(R)=C_6$ and thus $gr(\Gamma^{^{\prime}}(R))=6$.

\textbf{Subcase 2.} $R\cong R_1\times R_2$ and every $R_i$, $1\leq i \leq 2$,  is  indecomposable.
 By a similar argument to that in Subcase 1, if for some $1\leq i\leq 2$, $R_i$ is not a field, then  gr$(\Gamma^{^{\prime}}(R))=3$. Hence we may assume that  $R_1$  and $R_2$ are two fields. In this case, it is not hard to see that $\Gamma^{^{\prime}}(R)=K_{|R_1^*|}\cup K_{|R_2^*|}$. If $R\cong \mathbb{Z}_3\times\mathbb{Z}_3$ or $R\cong \mathbb{Z}_2\times\mathbb{Z}_3$ or $R\cong \mathbb{Z}_2\times\mathbb{Z}_2$, then gr$(\Gamma^{^{\prime}}(R))=\infty$ and otherwise gr$(\Gamma^{^{\prime}}(R))=3$.
 }
\end{proof}

In light of Theorem \ref{girth}, the last result of this section classifies all rings $R$ in terms of gr$(\Gamma^{^{\prime}}(R))$.
 \begin{cor}\label{cgirth}
 Let $R$ be a ring. Then the following statements hold.

$(1)$   gr$(\Gamma^{^{\prime}}(R))=6$ if and only if $R\cong \mathbb{Z}_2\times\mathbb{Z}_2\times\mathbb{Z}_2$.

$(2)$  gr$(\Gamma^{^{\prime}}(R))=\infty$ if and only if either $|Z(R)|\leq 3$ or $R$ is ring isomorphic to one of the following rings. $$\mathbb{Z}_3\times\mathbb{Z}_3, \mathbb{Z}_2\times\mathbb{Z}_3, \mathbb{Z}_2\times\mathbb{Z}_2$$

$(3)$  gr$(\Gamma^{^{\prime}}(R))=3$ if and only if $|Z(R)|\geq 4$ and $R$ is not  ring isomorphic to any of the following rings.
 $$\mathbb{Z}_3\times\mathbb{Z}_3, \mathbb{Z}_2\times\mathbb{Z}_3, \mathbb{Z}_2\times\mathbb{Z}_2, \mathbb{Z}_2\times\mathbb{Z}_2\times\mathbb{Z}_2$$
\end{cor}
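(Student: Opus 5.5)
The plan is to read the corollary off the case analysis in the proof of Theorem \ref{girth}, which already gives gr$(\Gamma^{^{\prime}}(R))\in\{3,6,\infty\}$. It then suffices to prove (1) and (2). Statement (3) follows as the complement of (1) and (2), since $\mathbb{Z}_2\times\mathbb{Z}_2\times\mathbb{Z}_2$ has $|Z(R)|=7\geq 4$.

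For (1), I go back through the proof of Theorem \ref{girth}. Every branch produces a triangle except the following:
\begin{itemize}
\item the non-reduced indecomposable case with $\mathrm{Nil}(R)^3=0$, where some vertex is adjacent to every other vertex;
\item the product of two fields, where $\Gamma^{^{\prime}}(R)=K_{|R_1^*|}\cup K_{|R_2^*|}$;
\item $\mathbb{Z}_2\times\mathbb{Z}_2\times\mathbb{Z}_2$, where the graph is $C_6$.
\end{itemize}
In the first two branches the graph is either a disjoint union of complete graphs or has a dominating vertex. In both situations any cycle forces a triangle: for a dominating vertex $v$, any edge $u-w$ avoiding $v$ gives the triangle $u-w-v$, and a cycle must contain such an edge. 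So the girth there is $3$ or $\infty$, and girth $6$ occurs only for $\mathbb{Z}_2^3$. The converse is the direct check that the six nonzero zero-divisors of $\mathbb{Z}_2^3$ form $C_6$: each idempotent $e$ has ${\rm ann}(e)=1-e$, so inclusion of annihilators is inclusion of supports, and the comparable pairs form the hexagon.

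For (2), I check the two directions separately.
\begin{itemize}
\item If $|Z(R)|\leq 3$, there are at most two vertices, so there is no cycle. For $\mathbb{Z}_2\times\mathbb{Z}_2$, $\mathbb{Z}_2\times\mathbb{Z}_3$ and $\mathbb{Z}_3\times\mathbb{Z}_3$, the graph is $K_a\cup K_b$ with $a,b\leq 2$, so it is acyclic.
\item Conversely, suppose the girth is $\infty$. By the case analysis the ring is either a product of two fields with both $|R_i^*|\leq 2$, which gives exactly the three listed rings, or a non-reduced indecomposable ring with $\mathrm{Nil}(R)^3=0$ and a dominating vertex.
\end{itemize}
In the second situation, acyclicity with a dominating vertex forces a star. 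I then want to show the star has at most two vertices, i.e.\ $|Z(R)|\leq 3$.

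The main obstacle is this non-reduced star case. The idea is that if $x\neq 0$ has ${\rm ann}(x)=\mathrm{Nil}(R)=Z(R)$, then for any unit $u$ the element $ux$ has the same annihilator, so $x$ and $ux$ are adjacent. Moreover, any two vertices with equal annihilators are adjacent. If there are three vertices, I try to produce a triangle among $x$, $ux$ and a third vertex, or two pairs of vertices with equal annihilators. I would first try the argument on $Z(R)$ itself, using that $Z(R)=\mathrm{Nil}(R)$ is an ideal, so $x+y$ and $x-y$ are vertices, and this should generate a triangle once $|Z(R)^*|\geq 3$. I expect small rings such as $\mathbb{Z}_8$, $\mathbb{Z}_9$ and $\mathbb{Z}_2[x,y]/(x,y)^2$ to need explicit checking, to confirm that a triangle always appears once $|Z(R)|\geq 4$.

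I also need to check the decomposable non-reduced case, which already yields a triangle in the proof of Theorem \ref{girth}, and the reduced indecomposable case, which likewise gives a triangle whenever there is a vertex.
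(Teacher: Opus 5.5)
Your treatment of (1), (3) and the easy direction of (2) is fine and matches the paper, which also reads these off the case analysis of Theorem \ref{girth}. The gap is the step you flag yourself as the main obstacle. In the non-reduced branch ($R$ indecomposable, $Z(R)=\mathrm{Nil}(R)$, $\mathrm{Nil}(R)^3=0$, and a vertex $x$ with ${\rm ann}_R(x)=\mathrm{Nil}(R)$), you never show that acyclicity forces $|Z(R)^*|\le 2$. The mechanism you suggest also fails in general. Multiplying the dominating vertex $x$ by units need not produce a new vertex:
\begin{itemize}
\item in $\mathbb{Z}_8$ the only vertex with annihilator $\mathrm{Nil}(R)$ is $4$, and every unit fixes it;
\item in $\mathbb{Z}_2[X,Y]/(X,Y)^2$ every unit fixes every vertex.
\end{itemize}
The elements $x\pm y$ give no control over annihilators. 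Checking a few small rings by hand cannot replace an argument that works for arbitrary, possibly infinite, rings.

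The missing idea is to apply the unit trick to a vertex that is \emph{not} dominating. Suppose $v\in Z(R)^*$ has ${\rm ann}_R(v)\neq\mathrm{Nil}(R)$. Then ${\rm ann}_R(v)\subsetneq Z(R)=\mathrm{Nil}(R)$, so $nv\neq 0$ for some $n\in\mathrm{Nil}(R)$. Now $1+n$ is a unit, $(1+n)v=v+nv\neq v$, and ${\rm ann}_R((1+n)v)={\rm ann}_R(v)$. Both $v$ and $(1+n)v$ differ from $x$, since their annihilator is not $\mathrm{Nil}(R)$, so $v-(1+n)v-x-v$ is a triangle. Hence girth $\infty$ forces ${\rm ann}_R(v)=\mathrm{Nil}(R)$ for every vertex $v$. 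Then $\Gamma^{^{\prime}}(R)$ is complete, so it has at most two vertices, i.e.\ $|Z(R)|\le 3$.

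The paper reaches the same conclusion by proving $\mathrm{Nil}(R)^2=0$ via the triangle $x-yz-z-x$. Be aware that this triangle degenerates when $yz=x$; in $\mathbb{Z}_8$ every nonzero product of nilpotents equals $4=x$. Taking $v=z$ and $n=y$ in the argument above, so that $(1+y)z=z+yz$, repairs that step as well.
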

\begin{proof}
{$(1)$ and $(3)$ are obtained, by proof of Theorem \ref{girth}.

$(2)$ One side is clear. To prove other side, assume that gr$(\Gamma^{^{\prime}}(R))=\infty$. By proof of Theorem \ref{girth}, if $R$ is non-reduced, $Z(R)=\mathrm{Nil}(R)$ and $\mathrm{Nil}(R)^3= (0)$. We show that $\mathrm{Nil}(R)^2= (0)$.  Since $\mathrm{Nil}(R)^3= (0)$ for some $x\in \mathrm{Nil}(R)^*$, ${\rm ann}_R(x)=\mathrm{Nil}(R)$ and thus $x$ is adjacent to every other vertex. Now, let $y\in \mathrm{Nil}(R)\setminus \{x,0\}$. If $yz\neq 0$ for some $z\in \mathrm{Nil}(R)$, then $x-yz-z-x$ is a triangle, a contradiction. So $\mathrm{Nil}(R)^2= (0)$. This means that $\Gamma^{^{\prime}}(R)$ is a complete graph and since gr$(\Gamma^{^{\prime}}(R))=\infty$, we deduce that $|Z(R)|\leq 3$. If $R$ is reduced, the result follows from Subcase 2 of Theorem \ref{girth}.}
\end{proof}
%%%%%%%%%%%%%%
%%%%%%%%%%%%%%%%%%%%%%%%%%%%%%%%%%%%%%%%%%%%%%%%%%%%%%%%%%%%%%%%%%%%%%%%%%%%%%%%%%%%%%%%%%%%%%%%%%%%%%%%%%%%%%%%%%%%%%%%%%%%%%%%%%%%%%%%%%%%%%%%%%%%%%

%%%%%%%%%%%%%%%%%%%%%%%%%%%%%%%%%%%%%%%%%%%%%%%%%%%%%%%%%%%%%%%%%%%%%%%%%%%%%%%%%%%%%%%%%%%%%%%%%%%%%%%%%%%%%%%%%%%%%%%%%%%%%%%%%%%%%%%%%%%%%%%%%%%%%%
{\section{ When $\Gamma^{^{\prime}}(R)$  and $\overline{AG(R)}$ are identical?}}\vspace{-2mm}

This section is devoted
to characterize rings whose inclusion graphs of annihilators are identical to
the complement of annihilator graphs. We start with the following simple example which shows that  $\Gamma^{^{\prime}}(R)\neq \overline{AG(R)}$ and $\Gamma^{^{\prime}}(R) =\overline{AG(R)}$ may occur.

\begin{example}
{\rm $(1)$ Let $R=\mathbb{Z}_2\times  \mathbb{Z}_4$. Then $\Gamma^{^{\prime}}(R)\neq \overline{AG(R)}$.

\unitlength=1.5mm
\begin{picture}(60,40)(-20,-20)
\put (-16,0){\circle*{1.2}}
\put (-16,0){\line (1,0){12}}
\put (-16,0){\line (0,1){6}}
\put (0,6){\circle*{1.2}}
\put (-16,6){\circle*{1.2}}
\put (-6,0){\circle*{1.2}}
\put (-6,0){\line (1,1){6}}
\put (-6,0){\line (1,0){12}}
\put (-6,0){\line (1,0){12}}
\put (6,0){\circle*{1.2}}
\put (6,0){\line (-1,1){6}}
\put (44,0){\circle*{1.2}}
\put (44,0){\line (0,1){6}}
\put (60,6){\circle*{1.2}}
\put (44,6){\circle*{1.2}}
\put (54,0){\circle*{1.2}}
\put (54,0){\line (1,1){6}}
\put (54,0){\line (1,0){12}}
\put (54,0){\line (1,0){12}}
\put (66,0){\circle*{1.2}}
\put (66,0){\line (-1,1){6}}
\put (-12,-15){$\Gamma^{^{\prime}}(\mathbb{Z}_2\times  \mathbb{Z}_4)$}
\put (48,-15){$  \overline{AG(\mathbb{Z}_2\times  \mathbb{Z}_4)} $}

\end{picture}

$(2)$ Let $R=\mathbb{Z}_2\times  \mathbb{Z}_2\times \mathbb{Z}_2$. Then $\Gamma^{^{\prime}}(R)= \overline{AG(R)}$.

\begin{picture}(60,40)(-20,-20)
\put (20,10){\circle*{1.2}}
\put (20,0){\circle*{1.2}}
\put (30,10){\circle*{1.2}}
\put (30,0){\circle*{1.2}}
\put (10,10){\circle*{1.2}}
\put (10,0){\circle*{1.2}}
\put (10,0){\line (1,0){20}}
\put (10,10){\line (1,0){20}}
\put (10,0){\line (0,1){10}}
\put (30,0){\line (0,1){10}}

\put (12,-12){$ \Gamma^{^{\prime}}(R)= \overline{AG(R)} $}
\end{picture}
 }
\end{example}
The following lemma will be used frequently in this section.
\begin{lem}\label{identical Lemma}
  Let $R$ be a ring. Then the following statements hold.

  $(1)$  $x-y$ is an edge of $AG(R)$ if and only if $Rx\cap {\rm ann}_R(y)\neq (0)$ and $Ry\cap {\rm ann}_R(x)\neq (0)$.

  $(2)$ If $x-y$ is an edge of $\overline{AG(R)}$, then  ${\rm ann}_R(x)\subseteq {\rm ann}_R(y)$ or ${\rm ann}_R(y)\subseteq {\rm ann}_R(x)$.

  $(3)$ If $x-y$ is an edge of $\overline{AG(R)}$, then $x-y$ is an edge of $\Gamma^{^{\prime}}(R)$.

   $(4)$ $AG(R)[\mathrm{Nil}(R)^*]$ is a complete subgraph of $AG(R)$.
\end{lem}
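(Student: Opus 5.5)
Throughout, write $I=Rx\cap {\rm ann}_R(y)$ and $J=Ry\cap {\rm ann}_R(x)$; part $(1)$ will be a statement about $I$ and $J$. I will prove the four parts in order, deriving each later part from the earlier ones. Everything rests on one observation: for any $x,y$, we always have ${\rm ann}_R(x)\cup{\rm ann}_R(y)\subseteq {\rm ann}_R(xy)$. So $x-y$ is an edge of $AG(R)$ exactly when some $r$ satisfies $rxy=0$ with $rx\neq 0$ and $ry\neq 0$.

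For $(1)$, suppose $x-y$ is an edge and pick such an $r$. Then $rx$ is a nonzero element of $Rx$, and $rx\cdot y=0$, so $0\neq rx\in I$. Symmetrically, $0\neq ry\in J$. For the converse I would use a further-annihilator argument rather than trying to combine $I$ and $J$ directly. Assume $I,J\neq(0)$ and, for contradiction, ${\rm ann}_R(xy)={\rm ann}_R(x)\cup{\rm ann}_R(y)$. An ideal that is a union of two ideals equals one of them, so ${\rm ann}_R(xy)={\rm ann}_R(x)$ or ${\rm ann}_R(xy)={\rm ann}_R(y)$.
\begin{itemize}
\item In the first case, take $0\neq sx\in I$. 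Then $sxy=0$, so $s\in{\rm ann}_R(xy)={\rm ann}_R(x)$, giving $sx=0$, a contradiction.
\item The second case is symmetric, using $0\neq ty\in J$.
\end{itemize}
So the converse reduces to the union-of-two-ideals fact. One caveat: the definition of $AG(R)$ presupposes $x\neq y$, but the argument works for arbitrary vertices.

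For $(2)$, suppose $x-y$ is an edge of $\overline{AG(R)}$, i.e.\ ${\rm ann}_R(xy)={\rm ann}_R(x)\cup{\rm ann}_R(y)$. By the same union-of-ideals fact, ${\rm ann}_R(xy)$ equals ${\rm ann}_R(x)$ or ${\rm ann}_R(y)$. Say it equals ${\rm ann}_R(x)$. Since ${\rm ann}_R(y)\subseteq{\rm ann}_R(xy)$ always holds, this gives ${\rm ann}_R(y)\subseteq{\rm ann}_R(x)$. The other case is symmetric.

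Part $(3)$ is $(2)$ combined with the definition of $\Gamma^{^{\prime}}(R)$: the two graphs share the vertex set $Z(R)^*$, and the containment obtained in $(2)$ is exactly adjacency in $\Gamma^{^{\prime}}(R)$.

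For $(4)$, take distinct $x,y\in\mathrm{Nil}(R)^*$. By $(1)$, it suffices to show $Rx\cap{\rm ann}_R(y)\neq(0)$, and $J\neq(0)$ follows by symmetry. I would use nilpotency of $y$. Let $k\ge 1$ be the largest integer with $xy^{k-1}\neq 0$. Such a $k$ exists because $x\neq 0$ and $y^n=0$ for some $n$. Then $xy^{k-1}$ is a nonzero element of $Rx$ annihilated by $y$. This step needs a little care with the indexing, but it is the same argument as the standard proof that a nonzero module element is killed by some nilpotent power. The main obstacle overall is therefore only the converse of $(1)$, and it is resolved by the union-of-ideals trick.
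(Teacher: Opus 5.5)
Your proof is correct. It differs from the paper only in that it is self-contained: the paper gives no argument for parts (1), (2) and (4), but quotes (1) from \cite[Lemma 2.1]{Coloring}, (2) from \cite[Lemma 2.1(3)]{Badawi} and (4) from \cite[Theorem 3.10(3)]{Badawi}. It then derives (3) from (2), exactly as you do.

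You supply direct arguments instead:
\begin{itemize}
\item For (1) and (2), one observation does all the work: ${\rm ann}_R(x)\cup{\rm ann}_R(y)\subseteq{\rm ann}_R(xy)$ always holds, and an ideal that is a union of two ideals equals one of them. This gives both directions of (1) and all of (2).
\item For (4), you reduce to (1) by a nilpotency-index argument: if $x\neq 0$ and $y$ is nilpotent, then $0\neq xy^{k-1}\in Rx\cap{\rm ann}_R(y)$ for the appropriate $k$.
\end{itemize}

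The argument for (4) is essentially the one the paper itself uses later in Lemma~\ref{subgraph3}. There it shows that annihilators of nilpotent elements are essential, and that two vertices with essential annihilators are adjacent in $AG(R)$ via part (1). So your route shows that (4) does not need to be imported from Badawi; it follows from (1) by an argument already present in the paper.

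The trade-off is simple. The citation-based proof is shorter, while yours makes this section independent of the external references at the cost of a few lines.
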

\begin{proof}
{$(1)$ is obtained, by \cite[Lemma 2.1]{Coloring}.

$(2)$ is obtained, by part $(3)$ of \cite[Lemma 2.1]{Badawi}.

$(3)$ is obtained, by part $(2)$.

$(4)$ is obtained, by part $(3)$ of \cite[Theorem 3.10]{Badawi}.
}
\end{proof}
The next result states $\Gamma^{^{\prime}}(R)= \overline{AG(R)}$, if $R$ is reduced.
\begin{thm}\label{reduced identical}
  Let $R$ be a reduced ring. Then $\Gamma^{^{\prime}}(R)= \overline{AG(R)}$.
\end{thm}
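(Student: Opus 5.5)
By Lemma \ref{identical Lemma}(3), every edge of $\overline{AG(R)}$ is already an edge of $\Gamma^{^{\prime}}(R)$. It therefore suffices to prove the reverse inclusion when $R$ is reduced. Concretely, we must show: if $x,y\in Z(R)^*$ are distinct with ${\rm ann}_R(x)\subseteq {\rm ann}_R(y)$, then $x$ and $y$ are not adjacent in $AG(R)$. That is, we must show ${\rm ann}_R(xy)={\rm ann}_R(x)\cup{\rm ann}_R(y)$.

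We argue directly, assuming ${\rm ann}_R(x)\subseteq {\rm ann}_R(y)$. Then ${\rm ann}_R(x)\cup{\rm ann}_R(y)={\rm ann}_R(y)$. The inclusion ${\rm ann}_R(y)\subseteq{\rm ann}_R(xy)$ is trivial, so the claim reduces to showing ${\rm ann}_R(xy)\subseteq{\rm ann}_R(y)$. Take $r$ with $rxy=0$. Then $ry\in{\rm ann}_R(x)\subseteq{\rm ann}_R(y)$, so $ry^2=0$. Hence $(ry)^2=r(ry^2)=0$. Since $R$ is reduced, $ry=0$, that is, $r\in{\rm ann}_R(y)$. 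Thus ${\rm ann}_R(xy)={\rm ann}_R(y)={\rm ann}_R(x)\cup{\rm ann}_R(y)$, and $x-y$ is not an edge of $AG(R)$. Equivalently, $x-y$ is an edge of $\overline{AG(R)}$.

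The case ${\rm ann}_R(y)\subseteq{\rm ann}_R(x)$ follows by symmetry. Combined with Lemma \ref{identical Lemma}(3), the edge sets coincide on the common vertex set $Z(R)^*$, so $\Gamma^{^{\prime}}(R)=\overline{AG(R)}$.

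The only step that uses the hypothesis is passing from $ry^2=0$ to $ry=0$, and reducedness gives it immediately. As an alternative, Lemma \ref{identical Lemma}(1) would also work: it suffices to show $Ry\cap{\rm ann}_R(x)=0$, and if $sy\in{\rm ann}_R(x)\subseteq{\rm ann}_R(y)$, then $(sy)^2=0$, so $sy=0$.
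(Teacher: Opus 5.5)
Your proof is correct. Its core is the same as the paper's: the element $ry$ (resp.\ $sy$) lies in $Ry\cap{\rm ann}_R(y)$, so it squares to zero and vanishes because $R$ is reduced.

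The difference is in packaging. The paper goes through the intersection criterion of Lemma \ref{identical Lemma}(1), which keeps the argument uniform with the non-reduced analysis in Theorem \ref{non reduced identical1}. Your main argument verifies ${\rm ann}_R(xy)={\rm ann}_R(y)$ directly from the definition of $AG(R)$. It is therefore self-contained apart from part (3) of the lemma, which you need for the reverse inclusion.

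Your alternative is the paper's route, and you chose the right intersection where the paper's text does not. The paper says $Rx\cap{\rm ann}_R(x)=(0)$ ``and thus'' $Rx\cap{\rm ann}_R(y)=(0)$. This uses ${\rm ann}_R(x)\subseteq{\rm ann}_R(y)$ in the wrong direction, and the conclusion is false in general. In $\mathbb{Z}_2\times\mathbb{Z}_2\times\mathbb{Z}_2$, take $x=(1,1,0)$ and $y=(1,0,0)$; then ${\rm ann}_R(x)\subseteq{\rm ann}_R(y)$, yet $0\neq(0,1,0)\in Rx\cap{\rm ann}_R(y)$. The valid step is $Ry\cap{\rm ann}_R(x)\subseteq Ry\cap{\rm ann}_R(y)=(0)$, exactly as in your last sentence.
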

\begin{proof}
{By part $(3)$ of Lemma \ref{identical Lemma}, $\overline{AG(R)}$ is a subgraph of $\Gamma^{^{\prime}}(R)$ and so we need only to show that  $\Gamma^{^{\prime}}(R)$ is a subgraph of $\overline{AG(R)}$. Let $x-y$ be an edge of $\Gamma^{^{\prime}}(R)$. Without loss of generality, ${\rm ann}_R(x)\subseteq {\rm ann}_R(y)$. Since $R$ is reduced, $Rx\cap {\rm ann}_R(x)= (0)$ and thus $Rx\cap {\rm ann}_R(y)= (0)$. By part $(1)$ of Lemma \ref{identical Lemma}, $x-y$ is not an edge of $AG(R)$, i.e., $x-y$ is an edge of $\overline{AG(R)}$.
}
\end{proof}
Next, we study non-reduced rings $R$ with $\Gamma^{^{\prime}}(R)= \overline{AG(R)}$.
\begin{thm}\label{non reduced identical1}
  Let $R$ be a non-reduced ring. Then  $\Gamma^{^{\prime}}(R)= \overline{AG(R)}$ if and only if all of the following statements hold.

 $(1)$ $\mathrm{Nil}(R)=\{0,a\}$, for some $a \in Z(R)$.

 $(2)$ $Rx+ {\rm ann}_R(x)\nsubseteq {\rm ann}_R(a)$,  for every $x \in Z(R)\setminus \{0,a\}$.

\end{thm}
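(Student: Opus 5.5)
By part $(3)$ of Lemma \ref{identical Lemma}, $\overline{AG(R)}$ is always a subgraph of $\Gamma^{^{\prime}}(R)$. So $\Gamma^{^{\prime}}(R)=\overline{AG(R)}$ holds exactly when no edge of $\Gamma^{^{\prime}}(R)$ is an edge of $AG(R)$. Both directions of the proof come down to testing this condition on well-chosen pairs. For the edge test we use part $(1)$ of Lemma \ref{identical Lemma}, or equivalently the defining condition ${\rm ann}_R(xy)\neq {\rm ann}_R(x)\cup{\rm ann}_R(y)$.

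\emph{Necessity of $(1)$.} By part $(4)$ of Lemma \ref{identical Lemma}, any two distinct nonzero nilpotents are adjacent in $AG(R)$. Hence under the equality they must have incomparable annihilators. We derive consequences in order.
\begin{itemize}
\item Let $a\in \mathrm{Nil}(R)^*$ and $r\in R$. Then ${\rm ann}_R(a)\subseteq {\rm ann}_R(ra)$ and $ra$ is nilpotent, so $ra\in\{0,a\}$. Thus $Ra=\{0,a\}$ and $R/{\rm ann}_R(a)\cong\mathbb{Z}_2$. In particular ${\rm ann}_R(a)$ is a maximal ideal.
\item Suppose $b\neq a$ is another nonzero nilpotent. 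Then ${\rm ann}_R(a)\neq{\rm ann}_R(b)$, since equal annihilators would make them adjacent in $\Gamma^{^{\prime}}(R)$. So these are two distinct maximal ideals $\mathfrak m_1,\mathfrak m_2$.
\item Now consider $a+b$, which is a nonzero nilpotent different from $a$. Since $ra,rb\in\{0,a\},\{0,b\}$, we get ${\rm ann}_R(a+b)=\mathfrak m_1\cap\mathfrak m_2\subseteq {\rm ann}_R(a)$. So $a$ and $a+b$ are adjacent in $\Gamma^{^{\prime}}(R)$, a contradiction.
\end{itemize}
Hence $\mathrm{Nil}(R)=\{0,a\}$. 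The main care needed here is the annihilator computation for $a+b$. It uses $Ra=\{0,a\}$ and $Rb=\{0,b\}$: for $r\in\mathfrak m_1\setminus\mathfrak m_2$ we have $r(a+b)=b\neq 0$, and symmetrically for $r\in\mathfrak m_2\setminus\mathfrak m_1$.

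\emph{Necessity of $(2)$.} Suppose $x\in Z(R)\setminus\{0,a\}$ and $Rx+{\rm ann}_R(x)\subseteq{\rm ann}_R(a)$. Then ${\rm ann}_R(x)\subseteq{\rm ann}_R(a)$, so $x-a$ is an edge of $\Gamma^{^{\prime}}(R)$. Moreover $ax=0$ gives $a\in Ra\cap{\rm ann}_R(x)$, and $Rx\cap{\rm ann}_R(a)=Rx\neq(0)$. By part $(1)$ of Lemma \ref{identical Lemma}, $x-a$ is then an edge of $AG(R)$, contradicting the equality.

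\emph{Sufficiency.} Assume $(1)$ and $(2)$; as in the first step, $(1)$ gives $Ra=\{0,a\}$, so ${\rm ann}_R(a)$ is maximal. Let $x-y$ be an edge of $\Gamma^{^{\prime}}(R)$ with ${\rm ann}_R(x)\subseteq{\rm ann}_R(y)$, and suppose for contradiction that it is also an edge of $AG(R)$.
\begin{itemize}
\item Then ${\rm ann}_R(xy)\supsetneq{\rm ann}_R(y)$, so there is $s$ with $sxy=0$ and $sy\neq0$.
\item From $sxy=0$ we get $sy\in{\rm ann}_R(x)\subseteq{\rm ann}_R(y)$, hence $(sy)^2=0$. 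So $sy$ is a nonzero nilpotent, and by $(1)$, $sy=a$.
\item It follows that $ax=0$, so $Rx\subseteq{\rm ann}_R(a)$. Also, for $u\in{\rm ann}_R(x)\subseteq{\rm ann}_R(y)$ we get $ua=usy=0$.
\item Therefore $Rx+{\rm ann}_R(x)\subseteq{\rm ann}_R(a)$. This contradicts $(2)$ provided $x\neq a$.
\end{itemize}
It remains to handle pairs involving $a$ itself. If $x=a$, then ${\rm ann}_R(a)\subseteq{\rm ann}_R(y)$. Maximality of ${\rm ann}_R(a)$ forces ${\rm ann}_R(y)={\rm ann}_R(a)$, and $ay=0$ because $y\in{\rm ann}_R(a)$ (as $ay=asy=s a^2=0$, or directly since $a\in{\rm ann}_R(y)\cdot$ nothing—more simply, ${\rm ann}_R(y)={\rm ann}_R(a)\ni a$). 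Then $Ry+{\rm ann}_R(y)\subseteq{\rm ann}_R(a)$, again contradicting $(2)$ applied to $y$. This bookkeeping of the pairs involving $a$ is the step most likely to need extra care.
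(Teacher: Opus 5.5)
Your proof is correct and follows the paper's route in all three parts: forcing $Ra=\{0,a\}$ and then examining $a+b$, testing the pair $a,x$, and showing that $sy$ is a nonzero nilpotent and hence equals $a$. Your separate treatment of the case $x=a$ in the sufficiency direction closes a gap in the paper's argument, which invokes $(2)$ there although $(2)$ excludes $x=a$. You should, however, replace the garbled parenthetical in that step with the clean reason: $a^2\in\mathrm{Nil}(R)=\{0,a\}$ and $a^2=a$ is impossible, so $a^2=0$ and hence $a\in{\rm ann}_R(a)={\rm ann}_R(y)$, i.e.\ $ay=0$. Likewise, $a+b\neq 0$ in the necessity part because $-a\in Ra=\{0,a\}$ forces $-a=a$.
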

\begin{proof}
{First, suppose that $\Gamma^{^{\prime}}(R)= \overline{AG(R)}$.

Let  $a\in \mathrm{Nil}(R)^*$. If  $b\in Ra\setminus \{0,a\}$, then $Rb\subseteq Ra$ and thus $a-b$ is an edge of $\Gamma^{^{\prime}}(R)$ which is not an edge of $\overline{AG(R)}$, by part $(4)$ of Lemma  \ref{identical Lemma}. This implies that $ Ra= \{0,a\}$, for every $a\in \mathrm{Nil}(R)^*$ and so ${\rm ann}_R(a)$ is a maximal ideal of $R$. Let $a ,b\in \mathrm{Nil}(R)^*$ and $a\neq b$. We know that ${\rm ann}_R(a)$ and ${\rm ann}_R(b)$ are maximal ideals. If ${\rm ann}_R(a)={\rm ann}_R(b)$, then $a-b$ is an edge of $\Gamma^{^{\prime}}(R)$ which is not an edge of $\overline{AG(R)}$, by part $(4)$ of Lemma \ref{identical Lemma}, a contradiction. Hence assume that  ${\rm ann}_R(a)=\mathfrak{m}_1$, ${\rm ann}_R(b)=\mathfrak{m}_2$ and $\mathfrak{m}_1\neq\mathfrak{m}_2$. By the above argument, it is not hard to see that  ${\rm ann}_R(a+b)=\mathfrak{m}_3$ is a maximal ideal such that $\mathfrak{m}_3\neq\mathfrak{m}_1$ and $\mathfrak{m}_3\neq\mathfrak{m}_2$. If $r\in {\rm ann}_R(a) \cap {\rm ann}_R(b)=\mathfrak{m}_1\cap\mathfrak{m}_2 $, then $r(a+b)=0$, and so $ {\rm ann}_R(a) \cap {\rm ann}_R(b)\subseteq {\rm ann}_R(a+b) $, a contradiction. This completes the proof of statement $(1)$.

If $Rx+ {\rm ann}_R(x)\subseteq {\rm ann}_R(a)$, for some $x \in Z(R)\setminus \{0,a\}$, then $a-x$ is an edge of $\Gamma^{^{\prime}}(R)$ which is not an edge of $\overline{AG(R)}$, a contradiction. Hence $Rx+ {\rm ann}_R(x)\subseteq {\rm ann}_R(a)$,   for every $x \in Z(R)\setminus \{0,a\}$.

Conversely, suppose that the statements $(1)$ and $(2)$ hold.  We show that $\Gamma^{^{\prime}}(R)= \overline{AG(R)}$.

Let $x-y$ be an edge of $\Gamma^{^{\prime}}(R)$ and ${\rm ann}_R(x)\subseteq {\rm ann}_R(y)$. By part $(1)$ of Lemma  \ref{identical Lemma}, we need only show that  $Ry\cap {\rm ann}_R(x)= (0)$. If $Ry\cap {\rm ann}_R(x)\neq (0)$, then there exists an element $r\in R$ such that $ryx=0$ and $ry\neq 0$. Thus $ry\in {\rm ann}_R(x)$. On the other hand,  since ${\rm ann}_R(x)\subseteq {\rm ann}_R(y)$, we deduce $ryy=ryry=0$. This means that $ry=a$ and so $x\in {\rm ann}_R(a)$. Since $ry=a$, we conclude that ${\rm ann}_R(y)\subseteq {\rm ann}_R(a)$, and  ${\rm ann}_R(x)\subseteq {\rm ann}_R(y)$ implies that $Rx+ {\rm ann}_R(x)\subseteq {\rm ann}_R(a)$. This contradicts the statement $(2)$. Therefore, $Ry\cap {\rm ann}_R(x)= (0)$, as desired.
}
\end{proof}
\begin{cor}\label{identical  cor}
  Let $R$ be a non-reduced ring.  If $\Gamma^{^{\prime}}(R)= \overline{AG(R)}$, then the following statements hold.

  $(1)$ $R$ is  indecomposable.

  $(2)$ If $R$ is an Artinian ring, then  $R=\mathbb{Z}_4$ or $R=\mathbb{Z}_2[X]/(X^2)$.

  $(3)$ $ {\rm ann}_R(x^2)={\rm ann}_R(x^3)$, for every $x \in Z(R)$.

  $(4)$ $|\mathrm {Max}(R)\cap\mathrm {Ass}(R)|=1$.

  $(5)$  If $R$ is a local ring, then  $R=\mathbb{Z}_4$ or $R=\mathbb{Z}_2[X]/(X^2)$.

  $(6)$ If there exists  $x \in Z(R)^*$ such that $ {\rm ann}_R(x)$ is an essential ideal, then $x$ is the only nilpotent element of $R$.
\end{cor}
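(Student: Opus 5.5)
The whole corollary rests on Theorem \ref{non reduced identical1}. Since $\Gamma^{^{\prime}}(R)= \overline{AG(R)}$ and $R$ is non-reduced, we have $\mathrm{Nil}(R)=\{0,a\}$ for a unique $a$. We also have $Rx+{\rm ann}_R(x)\nsubseteq {\rm ann}_R(a)$ for every $x\in Z(R)\setminus\{0,a\}$. The proof of that theorem further gives $Ra=\{0,a\}$, so $\mathfrak m:={\rm ann}_R(a)$ is a maximal ideal with $R/\mathfrak m\cong\mathbb{Z}_2$. Note $a^2=0$ and $2a=0$. The plan is to derive each item from these facts.

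\textbf{Items (1) and (3).} For (1), suppose $R\cong R_1\times R_2$. The nonzero nilpotent lives in one factor, say $a=(a_1,0)$ with $a_1\neq 0$. Then $(a_1,1)$ is not nilpotent, so it is not $a$. Consider $x=(0,1)$, a zero-divisor different from $0$ and $a$. We get $Rx+{\rm ann}_R(x)=R_2$-part $+\,R_1\times 0=R$, and we must compare this with ${\rm ann}_R(a)\supseteq 0\times R_2$. Condition (2) alone does not immediately contradict this, so instead I will use the nilpotent count. The element $(a_1,0)$ is nilpotent, and $(a_1,0)\cdot$ units gives at least the elements $(a_1 u,0)$; the cleaner route is that $Ra=\{0,a\}$ forces $R_1a_1=\{0,a_1\}$, which is fine. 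So I would rather take $x=(a_1,1)\cdot$? That is not a zero-divisor. The workable choice is $x=(1,0)$: then ${\rm ann}_R(x)=0\times R_2\subseteq{\rm ann}(a)$, and $Rx=R_1\times 0$. Now $Rx\subseteq{\rm ann}(a)$ requires $R_1a_1=0$, which is false. Instead I will check the graphs directly: $x=(1,0)$ and $a$ satisfy ${\rm ann}(x)\subseteq{\rm ann}(a)$, so they are adjacent in $\Gamma'$. But $Ra\cap{\rm ann}(x)=0$ while $Rx\cap{\rm ann}(a)\ni$? By Lemma \ref{identical Lemma}(1) we need both intersections nonzero to get an $AG$-edge, so this pair does not give a contradiction. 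The genuine obstacle in (1) is therefore to find a pair violating the equality. I would try $x=(1,0)$ and $y=(1,0)+a$, both non-nilpotent with equal annihilators $0\times R_2$ (since $a_1$ is nilpotent, $1+a_1$ is a unit). They are adjacent in $\Gamma'$. Their product $x y=(1+a_1,0)$ has annihilator $0\times R_2={\rm ann}(x)\cup{\rm ann}(y)$, so they are non-adjacent in $AG$. This is consistent, so I will fall back on condition (2) with $x=(1+a_1... )$. At this point I expect to have to exploit $|R_1|$: $\mathrm{Nil}(R_1)=\{0,a_1\}$ with $a_1R_1=\{0,a_1\}$, and look for $x\in R_1$, $x\ne a_1$, $x\in{\rm ann}_{R_1}(a_1)$ non-unit. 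Then $(x,0)$ has $R(x,0)+{\rm ann}(x,0)\subseteq{\rm ann}(a)$ exactly when ${\rm ann}_{R_1}(x)\subseteq{\rm ann}_{R_1}(a_1)$. In the local case $x$ may be chosen in the maximal ideal. This is the step I expect to be hardest.

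For (3), take $x\in Z(R)$. If $x$ is nilpotent, then $x^2=0$ and both sides equal $R$. Otherwise $x^2,x^3\neq a$, and ${\rm ann}(x^2)\subseteq{\rm ann}(x^3)$. Suppose $r x^3=0$. Then $rx^2$ is killed by $x$, and $(rx^2)^2=r^2x^4=0$, so $rx^2\in\mathrm{Nil}(R)=\{0,a\}$. If $rx^2=a$, then $xa=0$, so $x\in\mathfrak m$. Moreover $a\in Rx^2\subseteq Rx$, and I expect to combine this with condition (2) applied to $x^2$ to reach a contradiction.

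\textbf{Item (4).} The ideal $\mathfrak m={\rm ann}(a)$ is maximal and associated. If some other maximal ideal were ${\rm ann}(b)$, I would show that this forces a second nilpotent or contradicts condition (2) via $x=b$.

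\textbf{Items (5), (2) and (6).} For (5), in the local case $\mathfrak m$ is the unique maximal ideal, so $\mathfrak m=Z(R)$ and $\mathfrak m\cdot a=0$. Any $x\in\mathfrak m\setminus\{0,a\}$ satisfies $Rx+{\rm ann}(x)\subseteq\mathfrak m={\rm ann}(a)$, violating condition (2). Hence $\mathfrak m=\{0,a\}$ and $|R|=4$, which gives $\mathbb{Z}_4$ or $\mathbb{Z}_2[X]/(X^2)$. Item (2) then follows from (1) and (5), since an indecomposable Artinian ring is local. For (6), an essential annihilator meets $Ra=\{0,a\}$, so $a\in{\rm ann}(x)$. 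I would then show $x=a$ using condition (2) together with the fact that an essential annihilator contains $\mathrm{Nil}$.
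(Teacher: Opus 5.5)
Your proposal has genuine gaps, and the decisive one is item (1). You never reach a contradiction, and you discard the correct test element on the basis of a false claim. With $a=(a_1,0)$, the element $y=(a_1,1)$ \emph{is} a zero-divisor, because $(a_1,1)(a_1,0)=(a_1^2,0)=0$. Then ${\rm ann}_R(y)={\rm ann}_{R_1}(a_1)\times 0\subseteq {\rm ann}_{R_1}(a_1)\times R_2={\rm ann}_R(a)$, so $a$ and $y$ are adjacent in $\Gamma^{\prime}(R)$. But $a\in Ra\cap {\rm ann}_R(y)$ and $(0,1)\in Ry\cap {\rm ann}_R(a)$, so by Lemma \ref{identical Lemma}(1) they are also adjacent in $AG(R)$. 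This is exactly the paper's pair $(a,0),(a,1)$. Equivalently, $Ry+{\rm ann}_R(y)={\rm ann}_{R_1}(a_1)\times R_2={\rm ann}_R(a)$ violates condition (2) of Theorem \ref{non reduced identical1}. Your fallback through elements $(x,0)$ with $0\neq x\in {\rm ann}_{R_1}(a_1)\setminus\{a_1\}$ cannot succeed in general: for $R_1=\mathbb{Z}_4$ (e.g.\ $R=\mathbb{Z}_4\times\mathbb{Z}_2$) no such $x$ exists. Since you derive (2) from (1) and (5), item (2) also stays unproved until (1) is repaired. Your argument for (5) is correct and differs from the paper's. You apply condition (2) directly to every $x\in\mathfrak m\setminus\{0,a\}$. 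The paper instead uses (4) to write $\mathfrak m={\rm ann}_R(x)$, observes that $x$ is isolated in $\overline{AG(R)}$, and invokes connectedness of $\Gamma^{\prime}(R)$ from Theorem \ref{diam}. Your route is shorter and independent of (4).

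Your plan for (4) would fail as stated. Suppose ${\rm ann}_R(b)$ is a maximal ideal other than ${\rm ann}_R(a)$. Then $b\notin\{0,a\}$, so $b$ is not nilpotent, hence $b\notin{\rm ann}_R(b)$ and $Rb+{\rm ann}_R(b)=R$. So condition (2) holds for $b$, and no second nilpotent is produced. What works is to use (1): $Rb\cong R/{\rm ann}_R(b)$ is a minimal ideal and $b^2\neq 0$, so $Rb^2=Rb$ and $b=sb^2$ for some $s\in R$. Then $e=sb$ is an idempotent with $eb=b\neq 0$ and $e\in Rb\neq R$, contradicting indecomposability. (The paper only says (4) follows from the proof of Theorem \ref{non reduced identical1}, which gives maximality of ${\rm ann}_R(a)$ but not uniqueness.) Items (3) and (6) each stop one step short. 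In (3), from $rx^2=a$ you get $Rx^2\subseteq Rx\subseteq{\rm ann}_R(a)$, since $xa=0$, and ${\rm ann}_R(x^2)\subseteq{\rm ann}_R(rx^2)={\rm ann}_R(a)$. So the non-nilpotent vertex $x^2$ violates condition (2), which completes your route; the paper instead shows that $x$ and $x^2$ would be adjacent in $AG(R)$. In (6), knowing $\mathrm{Nil}(R)\subseteq{\rm ann}_R(x)$ only gives $Rx\subseteq{\rm ann}_R(a)$. The missing idea is to apply essentiality to $Rx$ itself: some $rx\neq 0$ satisfies $rx^2=0$, hence $(rx)^2=0$ and $rx=a$. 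Then ${\rm ann}_R(x)\subseteq{\rm ann}_R(a)$, and condition (2) forces $x=a$. The paper reaches the same conclusion by showing $x^2=0$, since otherwise $x$ and $x^2$ would both have essential annihilators and be adjacent in $AG(R)$.
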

\begin{proof}
{$(1)$ Suppose that $R\cong R_1\times R_2$, where $R_1$ and $R_2$ are two rings. Without loss of generality, assume that $|\mathrm{Nil}(R_1)|\geq 2$. Let $a\in \mathrm{Nil}(R_1)^*$, $x=(a,0)$ and $y=(a,1)$. Then $x-y$ is an edge of $\Gamma^{^{\prime}}(R)$ which is not an edge of $\overline{AG(R)}$, a contradiction. Thus  $R$ is indecomposable.

$(2)$  By part $(1)$,  $R$ is indecomposable. Since $R$ is an Artinian ring, we deduce that $R$ is local. By part $(4)$ of Lemma \ref{identical Lemma},  $\overline{AG(R)}$ is null. Also, by Theorem \ref{diam}, $\Gamma^{^{\prime}}(R)$ is connected. This implies that $\Gamma^{^{\prime}}(R)$ has only one vertex and so $|Z(R)|=2$. Thus $R=\mathbb{Z}_4$ or $R=\mathbb{Z}_2[X]/(X^2)$.

$(3)$ Suppose that  there exists $x \in Z(R)$ such that $ {\rm ann}_R(x^2)\neq {\rm ann}_R(x^3)$. By part $(1)$, $R$ is indecomposable and so $x\neq x^2$. This implies that $x-x^2$ is an edge of $\Gamma^{^{\prime}}(R)$. Since $ {\rm ann}_R(x^2)\neq {\rm ann}_R(x^3)$,  there exists an element $r\in R$ such that $rx^3=0$ and $rx^2\neq 0$. This means that  $Rx^2\cap {\rm ann}_R(x)\neq (0)$. Also,  $rx x^2=0$ implies that $Rx\cap {\rm ann}_R(x^2)\neq (0)$. Thus by part $(1)$ of Lemma \ref{identical Lemma}, $x-x^2$ is an edge of ${AG(R)}$, a contradiction. Hence $ {\rm ann}_R(x^2)={\rm ann}_R(x^3)$,  for every $x \in Z(R)$.

$(4)$ is easily obtained, by proof of Theorem \ref{non reduced identical1}.

$(5)$ Let  $\mathfrak{m}$ be the only maximal ideal of $R$. By part $(4)$,   $\mathfrak{m}={\rm ann}_R(x)$, for some $x\in Z(R)^*$. This, together with the fact that $Z(R)=\mathfrak{m}$, implies that $x$ is adjacent to all other vertices in ${AG(R)}$. Thus $\overline{AG(R)}$ is not connected, if $|Z(R)^*|\geq 2$. This contradicts Theorem \ref{diam} and so $|Z(R)^*|=1$, i.e., $R=\mathbb{Z}_4$ or $R=\mathbb{Z}_2[X]/(X^2)$.

$(6)$ Let $x \in Z(R)^*$ and $ {\rm ann}_R(x)$ be an essential ideal of $R$. If $x^2\neq 0$, then  part $(1)$ implies that $x\neq x^2$, as $R$ is indecomposable. This implies that $x-x^2$ is an edge of $\Gamma^{^{\prime}}(R)$. Since  $ {\rm ann}_R(x)$ is essential, $ {\rm ann}_R(x^2)$ is an essential ideal of $R$, too. Thus $Rx\cap {\rm ann}_R(x^2)\neq (0)$ and $Rx^2\cap {\rm ann}_R(x)\neq (0)$, a contradiction, by part $1$ of Lemma \ref{identical Lemma}. The result now follows from part $1$ of Theorem \ref{non reduced identical1}.}
\end{proof}

{\section{Complete inclusion graphs of annihilators}}\vspace{-2mm}
In this section, we study complete inclusion graphs of annihilators.
We start with the following result.

\begin{thm}\label{complet reduc}
  Let $R$ be a reduced ring that is not an integral domain.  Then $\Gamma^{^{\prime}}(R)$ is not a complete graph.
 \end{thm}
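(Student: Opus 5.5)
Since $R$ is not an integral domain, $Z(R)^*\neq\emptyset$; pick $x\in Z(R)^*$ and $0\neq y\in {\rm ann}_R(x)$, so $xy=0$ and $y\in Z(R)^*$. Because $R$ is reduced, $x\neq y$: otherwise $x^2=0$ forces $x=0$. I will show that $x$ and $y$ are not adjacent in $\Gamma^{^{\prime}}(R)$, which settles the claim.

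Assume, for a contradiction, that ${\rm ann}_R(x)\subseteq {\rm ann}_R(y)$. Then $y\in {\rm ann}_R(x)\subseteq {\rm ann}_R(y)$, so $y^2=0$ and hence $y=0$ by reducedness, a contradiction. In the same way, ${\rm ann}_R(y)\subseteq {\rm ann}_R(x)$ together with $x\in {\rm ann}_R(y)$ gives $x^2=0$, hence $x=0$, again a contradiction. So neither inclusion holds, $x$ and $y$ are not adjacent, and $\Gamma^{^{\prime}}(R)$ is not complete.

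An alternative route uses Theorem \ref{reduced identical}: $\Gamma^{^{\prime}}(R)=\overline{AG(R)}$. One then checks that $x-y$ is an edge of $AG(R)$ whenever $xy=0$, which follows from part $(1)$ of Lemma \ref{identical Lemma} since $x\in Rx\cap{\rm ann}_R(y)$ and $y\in Ry\cap{\rm ann}_R(x)$. The direct argument above is enough, though.

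There is no real obstacle. The only points needing care are that $R$ not being a domain guarantees two distinct vertices (this is where reducedness gives $x\neq y$), and that reducedness is used in the form $Rz\cap{\rm ann}_R(z)=(0)$, i.e.\ $z\in{\rm ann}_R(z)$ implies $z=0$.
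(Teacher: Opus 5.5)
Your proof is correct, but it takes a different and more elementary route than the paper.

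The paper works with minimal primes. A reduced non-domain has two distinct minimal primes $\mathfrak{p}_1,\mathfrak{p}_2$. The paper picks $x\in\mathfrak{p}_1\setminus\mathfrak{p}_2$ and $y\in\mathfrak{p}_2\setminus\mathfrak{p}_1$, and observes ${\rm ann}_R(x)\subseteq\mathfrak{p}_2$ and ${\rm ann}_R(y)\subseteq\mathfrak{p}_1$. It then cites Huckaba: in a reduced ring, an element of a minimal prime $\mathfrak{p}$ is annihilated by some element outside $\mathfrak{p}$. This gives $a\in{\rm ann}_R(x)\setminus\mathfrak{p}_1$ and $b\in{\rm ann}_R(y)\setminus\mathfrak{p}_2$, which witness both non-inclusions.

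You instead take any nonzero $x,y$ with $xy=0$ and let the elements themselves be the witnesses: $y\in{\rm ann}_R(x)\setminus{\rm ann}_R(y)$ and $x\in{\rm ann}_R(y)\setminus{\rm ann}_R(x)$. You use nothing beyond $z^2=0\Rightarrow z=0$. Your argument is self-contained and needs no prime-ideal theory or external citation. Along the way it shows that, for reduced $R$, every edge of the zero-divisor graph is a non-edge of $\Gamma^{\prime}(R)$.

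The paper's argument, in exchange, exhibits a larger family of non-adjacent pairs: elements separated by two minimal primes, whose product need not vanish. For example, $(0,1,1)$ and $(1,0,1)$ in $\mathbb{Z}_2\times\mathbb{Z}_2\times\mathbb{Z}_2$ are such a pair. This is the same minimal-prime language used in Theorem~\ref{diam}. Your pairs are a special case: $xy=0$ with $x,y\neq 0$ in a reduced ring forces $x\in\mathfrak{p}_1\setminus\mathfrak{p}_2$ and $y\in\mathfrak{p}_2\setminus\mathfrak{p}_1$ for suitable minimal primes.

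Your alternative route through $\Gamma^{\prime}(R)=\overline{AG(R)}$ and the $AG(R)$ edge criterion is also valid.
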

\begin{proof}
{Since $R$ is not an integral domain, $|\mathrm {Min}(R)|\geq 2$. Suppose that $\mathfrak{p}_1,\mathfrak{p}_2 \in \mathrm {Min}(R)$, $x\in \mathfrak{p}_1\setminus\mathfrak{p}_2$ and $y\in \mathfrak{p}_2\setminus\mathfrak{p}_1$. Clearly, ${\rm ann}_R(x)\subseteq\mathfrak{p}_2 $ and ${\rm ann}_R(y)\subseteq\mathfrak{p}_1 $. By \cite[Corollary 2.2]{Huckaba}, there exist elements $a\in {\rm ann}_R(x)$ and $b\in {\rm ann}_R(y)$ such that $a\not\in \mathfrak{p}_1$, $b\not\in \mathfrak{p}_2$. This implies that ${\rm ann}_R(x)\nsubseteq {\rm ann}_R(y)$ and ${\rm ann}_R(y)\nsubseteq {\rm ann}_R(x)$. Thus $x$ is not adjacent to $y$ and $\Gamma^{^{\prime}}(R)$ is not complete.
}
\end{proof}

In the continuing by $K_\infty$ we mean a complete graph with infinitely many verices; also,   $K_{1,\infty}$ denotes  a star graph with infinite vertices. 
\begin{thm}\label{starccAG}
  Let $R$ be a non-reduced ring and $AG(R)$ is a star graph.
  Then the following statements are equivalent.

$(1)$ $AG(R) \neq \Gamma^{^{\prime}}(R)$.

$(2)$ $\Gamma^{^{\prime}}(R)=K_\infty$.

$(3)$  $AG(R)=K_{1,\infty}$.
\end{thm}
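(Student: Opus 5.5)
The plan is to first pin down the shape of $R$ forced by the hypotheses, and then prove the cycle $(3)\Rightarrow(2)\Rightarrow(1)\Rightarrow(3)$.

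\textbf{Structural preliminaries.} By part $(4)$ of Lemma \ref{identical Lemma}, $AG(R)[\mathrm{Nil}(R)^*]$ is complete. A star graph contains no triangle, so $|\mathrm{Nil}(R)^*|\le 2$, and if $|\mathrm{Nil}(R)^*|=2$ then one of the two nilpotents is the center. Let $c$ be the center of the star. I would next show that some nonzero nilpotent $a$ is the center, or that the star has at most two vertices. The argument uses zero-product edges of $AG(R)$: if $uv=0$ with $u,v\in Z(R)^*$, then $1\in{\rm ann}_R(uv)\setminus({\rm ann}_R(u)\cup{\rm ann}_R(v))$, so $u-v$ is an edge of $AG(R)$. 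Take $a\in\mathrm{Nil}(R)^*$ with index $n$. Then $a^{n-1}$ annihilates $a$, so $a^{n-1}$ is a neighbour of $a$ (or $a^2=0$). Combining this with parts $(1)$ and $(4)$ of Lemma \ref{identical Lemma}, I expect to conclude that $a=c$ and $a^2=0$, so that $\mathrm{Nil}(R)=\{0,a\}$.

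The same zero-product observation shows that for a leaf $x$, every nonzero element of ${\rm ann}_R(x)$ is a neighbour of $x$, hence equals $a$. Therefore ${\rm ann}_R(x)=\{0,a\}\subseteq {\rm ann}_R(a)$ for every leaf $x$.

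\textbf{The implications.}

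$(3)\Rightarrow(2)$: Suppose $AG(R)=K_{1,\infty}$. Its complement is $K_\infty$ on the leaves together with the isolated center. By part $(3)$ of Lemma \ref{identical Lemma}, all leaves are pairwise adjacent in $\Gamma^{^{\prime}}(R)$. By the preliminaries, ${\rm ann}_R(x)\subseteq{\rm ann}_R(a)$ for every leaf $x$, so the center $a$ is adjacent to every leaf in $\Gamma^{^{\prime}}(R)$. Hence $\Gamma^{^{\prime}}(R)=K_\infty$.

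$(2)\Rightarrow(1)$: Suppose $\Gamma^{^{\prime}}(R)=K_\infty$. An infinite star is not complete, so $AG(R)\neq\Gamma^{^{\prime}}(R)$.

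$(1)\Rightarrow(3)$: I would argue by contradiction, assuming the star $AG(R)$ is finite, so $Z(R)$ is finite. A ring with finitely many zero-divisors and $Z(R)\neq 0$ is finite, so $R$ is a finite non-reduced ring. It therefore decomposes as a product of finite local rings. Using $\mathrm{Nil}(R)=\{0,a\}$ and the triangle-freeness of $AG(R)$, this leaves a short list of candidates: $\mathbb{Z}_4$, $\mathbb{Z}_2[X]/(X^2)$, $\mathbb{Z}_9$, $\mathbb{Z}_3[X]/(X^2)$, $\mathbb{Z}_8$, $\mathbb{Z}_2\times\mathbb{Z}_4$, $\mathbb{Z}_2\times\mathbb{Z}_2[X]/(X^2)$, and similar. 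For each candidate, one either checks directly that $AG(R)=\Gamma^{^{\prime}}(R)$ or sees that $AG(R)$ is not a star. This yields $AG(R)=\Gamma^{^{\prime}}(R)$, contradicting $(1)$.

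\textbf{Main obstacle.} I expect the hardest step to be the structural claim that the center is the unique nonzero nilpotent $a$ with $a^2=0$. It needs a careful case analysis on where $a$ and $a^{n-1}$ sit in the star. A secondary difficulty is keeping the finite-case enumeration in $(1)\Rightarrow(3)$ honest. There I would first try to avoid enumeration altogether, by showing directly that in a finite star every edge of $\Gamma^{^{\prime}}(R)$ joins the center to a leaf.
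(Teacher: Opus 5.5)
\textbf{Gap: the key structural step is asserted, not proved.} Your implication cycle is fine. The structure you aim for is also correct: for a star with at least three vertices, $\mathrm{Nil}(R)=\{0,a\}$, $a$ is the centre, and ${\rm ann}_R(x)=\{0,a\}$ for every leaf $x$. This is exactly what the paper reads off from \cite[Theorem 3.18]{Badawi}. But you never prove it: ``I expect to conclude'' stands in for the only nontrivial step. The mechanism you point to does not deliver it either. That $a^{n-1}$ is a neighbour of $a$ is already contained in part $(4)$ of Lemma~\ref{identical Lemma}, and it says nothing about non-nilpotent vertices. Two ideas are missing.
\begin{itemize}
\item To rule out $|\mathrm{Nil}(R)^*|=2$ once there is a third vertex, use the symmetry $y\mapsto -y$. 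Since ${\rm ann}_R(-y)={\rm ann}_R(y)$ and ${\rm ann}_R(x(-y))={\rm ann}_R(xy)$, the vertices $a,-a$ of $\mathrm{Nil}(R)=\{0,a,-a\}$ have the same neighbours. The centre lies in $\{a,-a\}$, so any third vertex is adjacent to both and closes a triangle.
\item To see that $a$ is the centre when $\mathrm{Nil}(R)=\{0,a\}$, use $Ra=\{0,a\}$. Then ${\rm ann}_R(a)$ is an additive subgroup of index $2$ containing the element $a$ of order $2$. If $a$ were a leaf with centre $c$, your zero-product edges would force ${\rm ann}_R(a)\subseteq\{0,a,c\}$. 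Hence ${\rm ann}_R(a)=\{0,a\}$ and $|R|=4$, leaving a single vertex.
\end{itemize}
Only after this do your leaf computation ${\rm ann}_R(x)=\{0,a\}$, and with it $(3)\Rightarrow(2)$, go through.

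\textbf{The finite case of $(1)\Rightarrow(3)$ is not a proof either.} ``And similar'' hides infinitely many rings, e.g.\ $\mathbb{F}_q\times\mathbb{Z}_4$ for every $q$. The list also contradicts your own constraints: $AG(\mathbb{Z}_8)=K_3$, since $\mathbb{Z}_8$ has three nonzero nilpotents, and $\mathbb{Z}_9$ does not satisfy the $\mathrm{Nil}(R)=\{0,a\}$ you invoke there. Your proposed alternative only restates the problem. In any such star with two leaves, the leaves share the annihilator $\{0,a\}$ and are adjacent in $\Gamma^{^{\prime}}(R)$. So what you actually must show is that a finite star has at most two vertices.

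No enumeration is needed for this. With the structure in hand, every leaf lies in ${\rm ann}_R(a)$, so $Z(R)={\rm ann}_R(a)$ is an ideal of index $2$. If $Z(R)$ were finite, $R$ would be finite. Then $R$ is local with nilpotent maximal ideal $Z(R)$, forcing $Z(R)=\mathrm{Nil}(R)=\{0,a\}$, a contradiction. Stars with at most two vertices coincide with $\Gamma^{^{\prime}}(R)$, because $\Gamma^{^{\prime}}(R)$ is connected by Theorem~\ref{diam}. So condition $(1)$ forces at least three vertices, hence an infinite star.
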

\begin{proof}
{ $(1)\Rightarrow(2)$
Since $AG(R) \neq \Gamma^{^{\prime}}(R)$, $|Z(R)^*|>2$ and thus, by  \cite[Theorem 3.18]{Badawi}, $\mathrm{Nil}(R)=\{0,w\}$ is a prime ideal of $R$, for some $w\in Z(R)^*$. This implies that ${\rm ann}_R(w)=Z(R) $ and  ${\rm ann}_R(x)=\{0,w\}$,  for every $x\in Z(R)\setminus \{0,w\}$. It is not hard to see that $\Gamma^{^{\prime}}(R)=K_\infty$.

$(2)\Rightarrow(3)$ Since $\Gamma^{^{\prime}}(R)=K_\infty$, we deduce that $|Z(R)|=\infty$. Since  $AG(R)$ is a star graph,  \cite[Theorem 3.18]{Badawi} implies that  $AG(R)=K_{1,\infty}$.

$(3)\Rightarrow(1)$ Since $AG(R)$ is a star graph,   ${\rm ann}_R(x)=\mathrm{Nil}(R)$ or ${\rm ann}_R(x)=Z(R)$, for every $x\in Z(R)^*$. Thus $\Gamma^{^{\prime}}(R)=K_\infty$ an so $AG(R) \neq \Gamma^{^{\prime}}(R)$.
 }
\end{proof}
%Theorem \ref{starccAG} follows the following corollary which completely  recognizes star inclusion graphs of annihilators.
%\begin{cor}\label{zcgirth}
% Let $R$ be a ring. Then the following statements are equivalent.
%
%$(1)$ $\Gamma^{^{\prime}}(R)$ is a star graph.
%
%$(2)$  gr$(\Gamma^{^{\prime}}(R))=\infty$  and $R$ is  a non-reduced ring.
%
%$(3)$  $\Gamma^{^{\prime}}(R)= K_{1,1}$.
%\end{cor}
%\begin{proof}
%{$(1)\Rightarrow(2)$ Since $\Gamma^{^{\prime}}(R)$ is a star graph, girth$(\Gamma^{^{\prime}}(R))=\infty$ and $R$ is  a non-reduced ring, by Corollary \ref{cgirth}.
%
%$(2)\Rightarrow(3)$ is clear, by Corollary \ref{cgirth}.
%
%$(3)\Rightarrow(1)$ is clear.??????????????che ertebati be theorem ghabl darad???? aya az corollary girth vazeh nist????
%}
%\end{proof}

\begin{thm}\label{identical  cor}
  Let $R$ be a non-reduced ring and  $\Gamma^{^{\prime}}(R)$ is a complete graph. Then the following statements hold.

  $(1)$ $R$ is indecomposable.

  $(2)$ $\mathrm {Ass}(R)\neq\emptyset$.
\end{thm}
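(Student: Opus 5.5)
For part (1), I will argue by contradiction. Suppose $R\cong R_1\times R_2$ with $R_1,R_2$ nonzero rings. Since $R$ is non-reduced, one factor, say $R_1$, contains a nonzero nilpotent $a$. I will exhibit two vertices of $\Gamma^{^{\prime}}(R)$ whose annihilators are incomparable. The natural candidates are $x=(a,0)$ and $y=(0,1)$.

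Their annihilators are ${\rm ann}_R(x)={\rm ann}_{R_1}(a)\times R_2$ and ${\rm ann}_R(y)=R_1\times\{0\}$. Both elements are nonzero zero-divisors, since $x\cdot(0,1)=0$ and $y\cdot(1,0)=0$. The element $(0,1)$ lies in ${\rm ann}_R(x)$ but not in ${\rm ann}_R(y)$. The element $(1,0)$ lies in ${\rm ann}_R(y)$ but not in ${\rm ann}_R(x)$, because $1\cdot a=a\neq 0$. Hence $x$ and $y$ are distinct and non-adjacent, which contradicts completeness. This step is routine.

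For part (2), I want an associated prime, i.e.\ a prime ideal of the form ${\rm ann}_R(z)$. The idea is to use completeness to make the annihilators of nonzero zero-divisors a chain, and then find a maximal element of that chain that is prime.

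Because $\Gamma^{^{\prime}}(R)$ is complete, the set $\{{\rm ann}_R(z): z\in Z(R)^*\}$ is totally ordered by inclusion. Since $R$ is non-reduced, $Z(R)^*\neq\emptyset$, so this set is nonempty. I will choose $w\in \mathrm{Nil}(R)^*$ with $w^2=0$; such an element exists by taking a suitable power of any nonzero nilpotent. Then $w\in{\rm ann}_R(w)$.

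I claim ${\rm ann}_R(w)$ contains every ${\rm ann}_R(z)$ with $z\in Z(R)^*$, so that it is the maximum of the chain. If instead ${\rm ann}_R(w)\subsetneq{\rm ann}_R(z)$, I will replace $w$ by a better candidate: take $w'\in{\rm ann}_R(z)\setminus{\rm ann}_R(w)$ and consider $ww'$ or $zw$. This maximality step is the main obstacle. A cleaner route is to show directly that ${\rm ann}_R(x)$ is prime whenever it is maximal among annihilators of nonzero elements. That is the standard argument: if $bc\in{\rm ann}_R(x)$ and $c\notin{\rm ann}_R(x)$, then ${\rm ann}_R(x)\subseteq{\rm ann}_R(cx)$, so equality holds by maximality, and hence $b\in{\rm ann}_R(cx)={\rm ann}_R(x)$.

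So the real task is to produce a maximal annihilator. I will show that the union $U$ of the chain is a prime ideal, and then show that $U$ is itself an annihilator. By the chain property, any $u,v\in U$ lie in a single ${\rm ann}_R(z)$, so $U$ is an ideal. To see $U$ is prime, let $bc\in U$, say $bcz=0$. Then either $b\in{\rm ann}_R(z)$, or $bz\neq0$ and $c\in{\rm ann}_R(bz)\subseteq U$. Since every zero-divisor annihilates some nonzero element, $U=Z(R)$. Thus $Z(R)$ is a prime ideal.

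It remains to show $U$ is attained. Take $w\in \mathrm{Nil}(R)^*$ with $w^2=0$. For any $z\in Z(R)^*$ with ${\rm ann}_R(w)\subsetneq{\rm ann}_R(z)$, I will consider $wz$. If $wz\neq0$, then ${\rm ann}_R(wz)\supseteq{\rm ann}_R(z)+Rw$, and I will compare it with ${\rm ann}_R(w)$ using the chain to reach a contradiction or an improved candidate. If $wz=0$, then $z\in{\rm ann}_R(w)$, and I will use nilpotency to get the inclusion I need. I expect that this comparison, combined with the chain, forces ${\rm ann}_R(w)=Z(R)$ or an analogous equality for some nonzero element of $\mathrm{Nil}(R)$; this is where I expect most of the work. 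Once that holds, $Z(R)={\rm ann}_R(w)$ is an associated prime, so $\mathrm{Ass}(R)\neq\emptyset$.
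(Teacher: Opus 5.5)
Your part (1) is correct. The pair $(a,0)$, $(0,1)$ has incomparable annihilators for any $a\neq 0$, so non-reducedness is not even needed there; the paper uses $(u,0)$, $(a,1)$ to the same effect.

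The gap is in part (2). You correctly show three things:
\begin{itemize}
\item the union of the chain $\{{\rm ann}_R(z): z\in Z(R)^*\}$ is an ideal equal to $Z(R)$;
\item this union is a prime ideal;
\item a maximal member of the chain would be prime.
\end{itemize}
But the step you defer is never proved: that the chain has a largest element (``$U$ is attained''), for instance ${\rm ann}_R(w)$ for some $w$ with $w^2=0$. That step is false in general, so no comparison argument with $wz$ can supply it.

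\textbf{Counterexample.} Let $k$ be a field and let $R$ be the quotient of the monoid algebra $k[x^q : q\in\mathbb{Q}_{\geq 0}]$ by the $k$-span of $\{x^q : q\geq 1\}$.
\begin{itemize}
\item Every nonzero $f\in R$ equals $x^q u$, where $u$ is a unit and $q\in[0,1)$ is the lowest exponent of $f$. The reason is that $u-c_q$ is nilpotent, where $c_q$ is the coefficient of $x^q$ in $f$.
\item ${\rm ann}_R(x^q)=x^{1-q}R$.
\item Hence the annihilators of nonzero zero-divisors are exactly the ideals $x^sR$ with $s\in(0,1)\cap\mathbb{Q}$.
\end{itemize}
These ideals form a chain, so $\Gamma^{^{\prime}}(R)$ is complete. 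Also $R$ is non-reduced, since $(x^{1/2})^2=0$. However:
\begin{itemize}
\item There is no largest annihilator. Your candidate $w=x^{1/2}$ has ${\rm ann}_R(w)=x^{1/2}R\subsetneq x^{1/4}R={\rm ann}_R(x^{3/4})$.
\item No annihilator is prime, because $(x^{s/2})^2\in x^sR$ but $x^{s/2}\notin x^sR$.
\end{itemize}
Thus $\mathrm{Ass}(R)=\emptyset$. Meanwhile $Z(R)=\bigcup_{s>0}x^sR$ is prime, exactly as your argument shows, but it is not an annihilator.

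The paper's proof makes the same unjustified leap. From $Z(R)=\bigcup_{x}{\rm ann}_R(x)$ and the chain property it concludes $Z(R)={\rm ann}_R(a)$ for some $a$. So you are not missing an idea the paper supplies: statement (2) as written needs an extra hypothesis, such as ACC on annihilators (for example, $R$ Noetherian). Under that hypothesis, your ``maximal annihilator is prime'' argument finishes the proof immediately, and it does not even use completeness of $\Gamma^{^{\prime}}(R)$.
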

\begin{proof}
{$(1)$ Suppose that $R\cong R_1\times R_2$, where $R_1$ and $R_2$ are two rings. Without loss of generality, assume that $|\mathrm{Nil}(R_1)|\geq 2$. Let  $a\in \mathrm{Nil}(R_1)^*$, $u\in U(R_1)$, $x=(u,0)$ and $y=(a,1)$. Then  $x-y$ is not an edge of $\Gamma^{^{\prime}}(R)$, a contradiction. Thus  $R$ is  indecomposable.

$(2)$ Since  $\Gamma^{^{\prime}}(R)$ is a complete graph,   ${\rm ann}_R(x)\subseteq {\rm ann}_R(y)$ or ${\rm ann}_R(y)\subseteq {\rm ann}_R(x)$, for every two distinct elements $x, y\in Z(R)^*$. This, together with the fact $Z(R)=\cup_{x\in Z(R)^*}{\rm ann}_R(x)$, implies that $Z(R)={\rm ann}_R(a)$, for some $a\in Z(R)^*$. This means that $Z(R)\in \mathrm {Ass}(R)$ and so $\mathrm {Ass}(R)\neq\emptyset$.}
\end{proof}

Let $R$ be a ring and $I_1\subseteq I_2\subseteq\cdots $ be a chain of ideals of $R$. Then we say that the set $\{I_1,I_2,\dots \}$ is a chain set of ideals. Also, let $\sum=\{{\rm ann}_R(x) \,\,|\,\,x\in Z(R)^*\}$.
\begin{thm}\label{complet chain }
  Let $R$ be a ring. Then the following statements are equivalent.

$(1)$ $\Gamma^{^{\prime}}(R)$ is a complete graph.

$(2)$ $\sum$ is a chain  of  ideals of $R$.
 \end{thm}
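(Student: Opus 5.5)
This equivalence should follow directly from the definitions. The only care needed is in fixing what ``$\sum$ is a chain of ideals'' means: the set $\sum=\{{\rm ann}_R(x)\,|\,x\in Z(R)^*\}$ is totally ordered by inclusion. Equivalently, it is a chain set in the sense defined just before the statement, at least when $\sum$ is countable. In general we read ``chain'' as ``totally ordered under $\subseteq$.'' With this reading, both directions reduce to comparing two annihilators at a time.

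$(1)\Rightarrow(2)$: Take two members ${\rm ann}_R(x)$ and ${\rm ann}_R(y)$ of $\sum$, with $x,y\in Z(R)^*$. If $x=y$ they coincide and there is nothing to show. If $x\neq y$, then $x$ and $y$ are distinct vertices of $\Gamma^{^{\prime}}(R)$, so completeness makes them adjacent. By the definition of the graph this gives ${\rm ann}_R(x)\subseteq {\rm ann}_R(y)$ or ${\rm ann}_R(y)\subseteq {\rm ann}_R(x)$. Hence any two members of $\sum$ are comparable, so $\sum$ is a chain. If one insists on the indexed form $I_1\subseteq I_2\subseteq\cdots$, one simply lists the elements of the totally ordered set $\sum$ in increasing order; this is possible whenever $\sum$ is well ordered or countable. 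We would remark that only comparability is really used.

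$(2)\Rightarrow(1)$: Let $x\neq y$ be vertices. Then ${\rm ann}_R(x)$ and ${\rm ann}_R(y)$ both lie in the chain $\sum$, so one contains the other. This is precisely adjacency of $x$ and $y$ in $\Gamma^{^{\prime}}(R)$, so the graph is complete. Note that distinct vertices with equal annihilators are still adjacent, since $\subseteq$ holds.

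The argument has no real obstacle. The one subtle point is the interpretation issue: a ``chain set'' as defined via an ascending sequence $I_1\subseteq I_2\subseteq\cdots$ might not literally capture an uncountable or non-well-ordered totally ordered family. We would state explicitly that ``chain'' means totally ordered under inclusion, and the proof is then a two-line unwinding of definitions in each direction.
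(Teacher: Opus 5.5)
Your proof is correct and is essentially the paper's own argument: completeness gives pairwise comparability of the annihilators of distinct vertices, and conversely comparability within $\sum$ is exactly adjacency. One small correction to your side remark: a countable, or even well-ordered, chain need not be listable as $I_1\subseteq I_2\subseteq\cdots$ (think of order type $\omega+1$, or a strictly descending sequence). So reading ``chain'' as ``totally ordered under $\subseteq$'' --- which is all the paper's proof actually uses --- is a necessary interpretive choice, not merely an equivalent reformulation.
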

\begin{proof}
{$(1)\Rightarrow (2)$ Since $\Gamma^{^{\prime}}(R)$ is a complete graph,  ${\rm ann}_R(x)\subseteq {\rm ann}_R(y)$ or ${\rm ann}_R(y)\subseteq {\rm ann}_R(x)$, for every two distinct ${\rm ann}_R(x), {\rm ann}_R(y)\in \sum$. This implies that $\sum$ is a chain set of ideals.

$(2)\Rightarrow (1)$ is clear.
}
\end{proof}
To state the last result of this paper, the following lemma is needed.

\begin{lem}\label{subgraph3}
 Let $R$ be a ring. Then the following statements hold.

$(1)$  ${\rm ann}_R(x)$ is an essential ideal of $R$, for every $x\in \mathrm{Nil}(R)$.

$(2)$ If there exist two distinct elements  $x,y\in Z(R)^*$ such that ${\rm ann}_R(x)$ and ${\rm ann}_R(y)$ are essential ideals of $R$, then $x$ is not adjacent to $y$ in
$\overline{AG(R)}$.
\end{lem}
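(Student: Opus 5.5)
For part $(1)$ we must show that ${\rm ann}_R(x)$ meets every nonzero ideal nontrivially when $x$ is nilpotent. The plan is the standard ``minimal power'' trick. If $x=0$ then ${\rm ann}_R(x)=R$ and there is nothing to prove. So let $x\in \mathrm{Nil}(R)^*$ and let $I$ be a nonzero ideal, and pick $0\neq r\in I$. Since $x^n=0$ for some $n$, we have $rx^n=0$. Hence there is a least integer $k\geq 0$ with $rx^{k}\neq 0$ and $rx^{k+1}=0$; here $k=0$ is allowed, meaning $r$ itself already annihilates $x$. Then $rx^{k}$ is a nonzero element of $I$, because $I$ is an ideal, and it lies in ${\rm ann}_R(x)$. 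Therefore $I\cap {\rm ann}_R(x)\neq (0)$, so ${\rm ann}_R(x)$ is essential. This step is routine.

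For part $(2)$, by part $(1)$ of Lemma \ref{identical Lemma}, it suffices to show that $x-y$ is an edge of $AG(R)$. Equivalently, we must show $Rx\cap {\rm ann}_R(y)\neq (0)$ and $Ry\cap {\rm ann}_R(x)\neq (0)$. Both follow at once from essentiality. Since $x\neq 0$, the ideal $Rx$ is nonzero, and since ${\rm ann}_R(y)$ is essential, $Rx\cap{\rm ann}_R(y)\neq(0)$. Symmetrically, $Ry\neq (0)$ and ${\rm ann}_R(x)$ is essential, so $Ry\cap{\rm ann}_R(x)\neq (0)$. Hence $x$ and $y$ are adjacent in $AG(R)$, and so they are not adjacent in $\overline{AG(R)}$.

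No real obstacle is expected. The only care points are these:
\begin{itemize}
\item In part $(1)$, the choice of the minimal exponent must allow $k=0$.
\item In part $(2)$, we must use the Lemma's characterization, which requires $x\neq y$; this is given in the hypothesis.
\item In part $(2)$, the vertices must be nonzero so that the principal ideals $Rx$ and $Ry$ are nonzero.
\end{itemize}
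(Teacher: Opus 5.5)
Your proposal is correct and follows essentially the same route as the paper. In part $(1)$ you work directly with the last nonvanishing product $rx^k$, whereas the paper assumes $I\cap {\rm ann}_R(x)=(0)$ and derives $ax^n\neq 0$ for all $n$, which is the same idea. Part $(2)$ is exactly the paper's argument: essentiality of both annihilators combined with the edge characterization of $AG(R)$ in part $(1)$ of Lemma \ref{identical Lemma}.
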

\begin{proof}
{$(1)$ Suppose to the contrary that, there exists an ideal $I$ such that $I\cap {\rm ann}_R(x)=(0)$. This implies that $ax\neq 0$, for every $a\in I^*$. Obviously, $ax\in I$ and so $axx=ax^2\neq 0$. By continuing this procedure,  $ax^n\neq 0$, for every positive integer $n$, a contradiction. Hence ${\rm ann}_R(x)$ is an essential ideal of $R$, as desired.

 $(2)$ Suppose that $x,y\in Z(R)^*$ and  ${\rm ann}_R(x)$, ${\rm ann}_R(y)$ are essential ideals of $R$. It is enough to show that $x$ is adjacent to $y$ in
$AG(R)$.  Since  ${\rm ann}_R(x)$ and ${\rm ann}_R(y)$ are essential ideals of $R$,  $Rx\cap {\rm ann}_R(y)\neq (0)$ and  $Ry\cap {\rm ann}_R(x)\neq (0)$.  Thus by Lemma \cite[Lemma 2.1]{Coloring}, $x$ is adjacent to $y$ in $AG(R)$.}
\end{proof}

\begin{thm}\label{complet chain2}
  Let $R$ be a ring and $|\sum|= 2$. Then the following statements are equivalent:

$(1)$ ${\rm ann}_R(Z(R))$ is a prime ideal of $R$.

$(2)$  $\Gamma^{^{\prime}}(R)=K_{|Z(R)^*|}$ and $ \overline{AG(R)}=\overline{K}_{|{\rm ann}_R(Z(R))^*|}\cup K_{|Z(R)\setminus {\rm ann}_R(Z(R))|}$.

$(3)$ ${\rm ann}_R(Z(R))=\mathrm{Nil}(R)$ and  $|Z(R)|=\infty$.

$(4)$ $R$ is a non-reduced ring and $\sum=\mathrm {Ass}(R)$.
 \end{thm}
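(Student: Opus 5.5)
The plan is to first pin down the shape of $\sum$ and of $N:={\rm ann}_R(Z(R))$, and then go around the cycle $(1)\Rightarrow(2)$, $(2)\Rightarrow(1)$, $(1)\Leftrightarrow(3)$, $(4)\Leftrightarrow(1)$.

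\textbf{Preliminaries.} Every $z\in Z(R)$ lies in some member of $\sum$. If $w\in N^*$, then $Z(R)\subseteq {\rm ann}_R(w)\neq R$, so ${\rm ann}_R(w)=Z(R)$. If $x\in Z(R)\setminus N$, then ${\rm ann}_R(x)\neq Z(R)$, because some zero-divisor does not kill $x$. Moreover $N\subseteq {\rm ann}_R(x)$ for every $x\in Z(R)$.

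\textbf{$(1)\Rightarrow(2)$ and $(1)\Rightarrow(3)$.} Let $P=N$ be prime.
\begin{itemize}
\item $P\neq 0$: otherwise $R$ is a domain and $\sum=\emptyset$.
\item $P=\mathrm{Nil}(R)$: since $P^2\subseteq PZ(R)=0$ we get $P\subseteq \mathrm{Nil}(R)\subseteq P$.
\item ${\rm ann}_R(x)=P$ for $x\in Z(R)\setminus P$: if $rx=0\in P$ then $r\in P$, and conversely $P\subseteq{\rm ann}_R(x)$.
\item $Z(R)\neq P$: otherwise $\sum=\{Z(R)\}$, contradicting $|\sum|=2$.
\end{itemize}
Hence $\sum=\{\mathrm{Nil}(R),Z(R)\}$ is a chain, so $\Gamma^{^{\prime}}(R)$ is complete by Theorem \ref{complet chain }.

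For $\overline{AG(R)}$ there are three kinds of pairs:
\begin{itemize}
\item Two vertices of $N^*$ are adjacent in $AG(R)$ by Lemma \ref{identical Lemma}(4).
\item For $w\in N^*$ and $x\notin N$ we have $wx=0$. Then ${\rm ann}_R(wx)=R\neq Z(R)={\rm ann}_R(w)\cup{\rm ann}_R(x)$, so they are adjacent in $AG(R)$.
\item For $x,y\in Z(R)\setminus N$ primeness gives $xy\notin P$. Then ${\rm ann}_R(xy)=P={\rm ann}_R(x)\cup{\rm ann}_R(y)$, so they are nonadjacent in $AG(R)$.
\end{itemize}
This is exactly the stated decomposition of $\overline{AG(R)}$.

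For $|Z(R)|=\infty$ in (3), suppose $Z(R)$ is finite. By the known fact that a finite nonzero $Z(R)$ forces $R$ to be finite, $R$ is finite. Here $Z(R)={\rm ann}_R(w)$ is an ideal, so $R$ is local with $Z(R)=\mathfrak m=\mathrm{Nil}(R)=P$, contradicting $Z(R)\neq P$.

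\textbf{$(2)\Rightarrow(1)$.} Completeness gives a chain $\sum=\{A\subsetneq B\}$. Since the members of $\sum$ cover $Z(R)$, $B=Z(R)$. Take $r,s\notin N$ with $rs\in N$; we want a contradiction.
\begin{itemize}
\item If, say, $r\notin Z(R)$, then $rsZ(R)=0$ forces $sZ(R)=0$, i.e.\ $s\in N$, a contradiction.
\item If $r,s\in Z(R)\setminus N$ are distinct, they are adjacent in $\overline{AG(R)}$. Then $rs=0$ is impossible, since it would give ${\rm ann}_R(rs)=R$. Also $rs\in N^*$ is impossible, since then ${\rm ann}_R(rs)=Z(R)$, while ${\rm ann}_R(r)\cup{\rm ann}_R(s)=A\neq Z(R)$.
\end{itemize}
The case $r=s$ (so $r^2\in N$) needs a separate trick, e.g.\ replacing $s$ by another element of $r+N$ or by $2r$. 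The case $N=0$ is excluded the same way: $\sum\neq\emptyset$ yields distinct $x,y\in Z(R)^*$ with $xy=0$.

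\textbf{$(3)\Rightarrow(1)$ and $(4)\Leftrightarrow(1)$.} For $(1)\Rightarrow(4)$ we have already shown that $\sum=\{P,Z(R)\}$ consists of primes ($Z(R)$ is an ideal of zero-divisors containing the prime $P$ with the quotient structure above), and both are annihilators, so $\sum=\mathrm{Ass}(R)$; $R$ is non-reduced because $P=\mathrm{Nil}(R)\neq 0$.

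For $(4)\Rightarrow(1)$, pick $w\in\mathrm{Nil}(R)^*$ with ${\rm ann}_R(w)=P_1$. For $z\in Z(R)\setminus P_1$ the element $zw$ is a nonzero nilpotent, and ${\rm ann}_R(zw)\supseteq P_1\cup\{z\}$, so ${\rm ann}_R(zw)=P_2\supsetneq P_1$. Hence $Z(R)=P_2$, and from this I would show that $N$ coincides with the prime $P_1$.

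\textbf{Main obstacle: $(3)\Rightarrow(1)$.} As literally stated it seems to need $\mathrm{Nil}(R)\neq 0$. For $R=\mathbb Z\times\mathbb Z$ we have $|\sum|=2$, $N=0=\mathrm{Nil}(R)$ and $Z(R)$ infinite, yet $0$ is not prime. I would therefore first try to prove $(3)\Rightarrow(1)$ under the extra assumption that $R$ is non-reduced. In that case $N=\mathrm{Nil}(R)\neq 0$ gives $Z(R)\in\sum$, and I would run the argument of $(2)\Rightarrow(1)$ via ${\rm ann}_R(rs)$. If the non-reduced assumption does not come for free, the statement needs that hypothesis added.
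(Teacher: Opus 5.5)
Your diagnosis of $(3)\Rightarrow(1)$ is correct, and the defect is in the statement, not in your approach. $\mathbb{Z}\times\mathbb{Z}$ (or any product of two fields, one of them infinite) has $|\sum|=2$, ${\rm ann}_R(Z(R))=0=\mathrm{Nil}(R)$ and $|Z(R)|=\infty$, yet it satisfies none of (1), (2), (4). The paper's cycle $(1)\Rightarrow(2)\Rightarrow(3)\Rightarrow(4)\Rightarrow(1)$ breaks exactly at $(3)\Rightarrow(4)$. There it invokes ``since $R$ is a non-reduced ring'' and a contradiction with the hypothesis on $\overline{AG(R)}$, and neither is available from (3).

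Under the added non-reduced hypothesis, do not rerun your $(2)\Rightarrow(1)$ argument, because it uses adjacencies in $\overline{AG(R)}$ that (3) does not supply. Argue directly instead:
\begin{itemize}
\item Since $N=\bigcap\sum$, $N=\mathrm{Nil}(R)\neq 0$, and ${\rm ann}_R(w)=Z(R)$ for $w\in N^*$, we get $\sum=\{N,Z(R)\}$.
\item Hence ${\rm ann}_R(z)=N$ for every $z\in Z(R)\setminus N$.
\item If $rs\in N$ with $r,s\in Z(R)\setminus N$, then $rs^2=0$. So $r\in{\rm ann}_R(s^2)=N$, because $s^2\in Z(R)\setminus \mathrm{Nil}(R)$.
\end{itemize}

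The genuine error is in your $(4)\Rightarrow(1)$. The claim $z\in{\rm ann}_R(zw)$ is false. It means $z^2\in{\rm ann}_R(w)=P_1$, which is impossible since $P_1$ is prime and $z\notin P_1$. In fact ${\rm ann}_R(zw)=(P_1:z)=P_1$, so nothing produces $P_2\supsetneq P_1$, and ``hence $Z(R)=P_2$'' is unsupported.

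The missing observation is $N=\bigcap\sum$. Every member of $\sum=\mathrm{Ass}(R)$ is prime, so $N\supseteq \mathrm{Nil}(R)\neq 0$. By your own preliminary, ${\rm ann}_R(w)=Z(R)\in\sum$ for $w\in N^*$. Thus $\sum=\{P,Z(R)\}$ with $P\subsetneq Z(R)$, and $N=P$ is prime. Note that your chosen nilpotent $w$ already lies in $N$, so ${\rm ann}_R(w)=Z(R)$ and the set of $z$ you quantify over is empty.

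Two smaller points:
\begin{itemize}
\item The case $r=s$ in $(2)\Rightarrow(1)$ closes with your first suggestion. Here $N=A\neq 0$; take $n\in N^*$ and $s'=r+n$. Then $s'\in Z(R)\setminus N$, $s'\neq r$, and $rs'=r^2+rn=r^2\in N$, which reduces to the distinct case.
\item In $(1)\Rightarrow(4)$ you also need $\mathrm{Ass}(R)\subseteq\sum$: an associated prime ${\rm ann}_R(x)$ with $x$ regular would be $0$, which is not prime. Separately, $Z(R)$ is prime simply because it is an ideal whose complement, the set of regular elements, is multiplicatively closed.
\end{itemize}
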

\begin{proof}
{$(1)\Rightarrow (2)$ Since ${\rm ann}_R(Z(R))$ is a prime ideal of $R$, either   ${\rm ann}_R(x)= {\rm ann}_R(Z(R))$ or ${\rm ann}_R(x)=Z(R)$,  for every $x\in Z(R)^*$. This implies that ${\rm ann}_R(x)\subseteq {\rm ann}_R(y)$ or ${\rm ann}_R(y)\subseteq {\rm ann}_R(x)$, for every two distinct $x, y\in Z(R)^*$. Thus $\Gamma^{^{\prime}}(R)$ is a complete graph. Also,it is not hard to see that
$ \overline{AG(R)}=\overline{K}_{|{\rm ann}_R(Z(R))^*|}\cup K_{|Z(R)\setminus {\rm ann}_R(Z(R))|}$.

$(2)\Rightarrow (3)$ If $|Z(R)|<\infty$, then $R$ is an Artinian ring. Since  $\Gamma^{^{\prime}}(R)$ is a complete graph, by Theorem \ref{identical  cor}, $R$ is indecomposable.  This, together with $R$ is an Artinian ring, implies that $R$ is an Artinian local ring. Hence by Lemma \ref{subgraph3}, $\overline{AG(R)}$ is  a null graph, a contradiction. So $|Z(R)|=\infty$.

$(3)\Rightarrow (4)$ Since $|\sum|= 2$,  we deduce that $Z(R)={\rm ann}_R(x)\cup {\rm ann}_R(y)$, for some $x,y\in Z(R)$. Since $R$ is a non-reduced ring, ${\rm ann}_R(x)\subseteq {\rm ann}_R(y)$ or ${\rm ann}_R(y)\subseteq {\rm ann}_R(x)$. Without loss  of generality, suppose that ${\rm ann}_R(y)\subseteq {\rm ann}_R(x)$. Thus $Z(R)={\rm ann}_R(x)$. It is enough to show that ${\rm ann}_R(y)$  is a prime ideal of $R$. We know that  ${\rm ann}_R(Z(R))={\rm ann}_R(y)$. We claim that ${\rm ann}_R(Z(R))=\mathrm{Nil}(R)$. Since ${\rm ann}_R(Z(R))\subseteq \mathrm{Nil}(R)$, we need only show that  $\mathrm{Nil}(R)\subseteq {\rm ann}_R(Z(R))$. Suppose that $a\in \mathrm{Nil}(R)\setminus {\rm ann}_R(Z(R))$. Since $|\sum|= 2$,
${\rm ann}_R(a)={\rm ann}_R(Z(R))$. Thus ${\rm ann}_R(Z(R))$ is an essential ideal of $R$. This contradicts the hypothesis $ \overline{AG(R)}=\overline{K}_{|{\rm ann}_R(Z(R))^*|}\cup K_{|Z(R)\setminus {\rm ann}_R(Z(R))|}$ and so the claim is proved. Let $ab\in {\rm ann}_R(Z(R))$, $a\not\in {\rm ann}_R(Z(R)) $ and $b\not\in {\rm ann}_R(Z(R))$. Since $|\sum|= 2$, we conclude that $ab\neq 0$, $b^2\neq 0$ and ${\rm ann}_R(b)={\rm ann}_R(b^2)$. If $ab=0$, then $|\sum\neq 2$, a contradiction. Thus $ab\neq0$. Since $ab\in {\rm ann}_R(Z(R))$, $abb=0$. This, together with ${\rm ann}_R(b)={\rm ann}_R(b^2)$,  implies that $ab=0$, a contradiction. Hence $\sum=\mathrm {Ass}(R)$.

$(4)\Rightarrow (1)$ is clear.
}
\end{proof}

We end this paper with the following example.

\begin{example}
\end{example}

Let $R= \mathbb{Z}_{2}[X,Y]/(XY, X^2) $, and let $x=X+(XY+X^2)$ and $y=Y+(XY+X^2)$. Then $Z(R)=(x,y)R$ and $\mathrm{Nil}(R)=\{0,x\}$. Then the following statements hold.

$(1)$ ${\rm ann}_R(Z(R))$ is a prime ideal of $R$.

$(2)$  $\Gamma^{^{\prime}}(R)=K_{|Z(R)^*|}$ and $ \overline{AG(R)}=\overline{K}_{|{\rm ann}_R(Z(R))^*|}+K_{|Z(R)\setminus {\rm ann}_R(Z(R))|}$.

$(3)$ ${\rm ann}_R(Z(R))=\mathrm{Nil}(R)$ and  $|Z(R)|=\infty$.

$(4)$ $R$ is a non-reduced ring and $\sum=\mathrm {Ass}(R)$.

{\section{Conclusion}}\vspace{-2mm}
In this paper, the authors introduced the inclusion graph of annihilators and studied various inter-relationships between $\Gamma^{^{\prime}}(R)$ as a
graph and $R$ as a ring. The main goal of these discussions is to study connectedness, diameter, girth, completeness and affinity between $\Gamma^{^{\prime}}(R)$ and $\overline{AG(R)}$. As
a topic of further research, one can look into the clique and chromatic numbers of such graphs and the structures of rings $R$ with $\mathrm{diam}(\Gamma^{^{\prime}}(R))\in \{3, 4\}$.

%%%%%%%%%%%%%%%%%%%%%%%%%%%%%%%%%%%%%%%%%%%%%%%%%%%%%%%%%%%%%%%%%%%%%%%%%%%%%%%%%%%%%%%%%%%%%%%%%%%%%%%%%%%%%%%%%%%%%%%%%%%%%%%%%%%%%%%%%%%%%%%%%%%%%%
{}

\end{document}